\theoremstyle{plain}
\newtheorem{Thm}{Theorem}[section]
\newtheorem{Lem}[Thm]{Lemma}
\newtheorem{Cor}[Thm]{Corollary}
\newtheorem{Pro}[Thm]{Proposition}
\newtheorem{Prp}[Thm]{Properties}
\newtheorem{Sub}[Thm]{Sublemma}
\theoremstyle{definition}
\newtheorem{Def}[Thm]{Definition}
\newtheorem{Exm}[Thm]{Example}
\newtheorem{Exs}[Thm]{Examples}
\theoremstyle{remark}
\newtheorem{Rem}[Thm]{Remark}
\newtheorem{Rms}[Thm]{Remarks}
\newtheorem*{Com}{Commentary}
\newcommand{\myEmail}{piotr.niemiec@uj.edu.pl}
\newcommand{\myAddress}{\noindent{}Piotr Niemiec\\{}Jagiellonian University\\{}Institute of Mathematics\\{}
   ul. \L{}ojasiewicza 6\\{}30-348 Krak\'{o}w\\{}Poland}
\newcommand{\myData}{\author[P. Niemiec]{Piotr Niemiec}\address{\myAddress}\email{\myEmail}}
\newcommand{\CCC}{\mathbb{C}}\newcommand{\DDD}{\mathbb{D}}
\newcommand{\KKK}{\mathbb{K}}
\newcommand{\NNN}{\mathbb{N}}
\newcommand{\RRR}{\mathbb{R}}
\newcommand{\ZZZ}{\mathbb{Z}}
\newcommand{\BBb}{\CMcal{B}}\newcommand{\CCc}{\CMcal{C}}\newcommand{\DDd}{\CMcal{D}}
\newcommand{\EEe}{\CMcal{E}}\newcommand{\FFf}{\CMcal{F}}\newcommand{\GGg}{\CMcal{G}}\newcommand{\HHh}{\CMcal{H}}
\newcommand{\RRr}{\CMcal{R}}
\newcommand{\WWw}{\CMcal{W}}
\newcommand{\DdD}{\EuScript{D}}
\newcommand{\NnN}{\EuScript{N}}
\newcommand{\RrR}{\EuScript{R}}
\newcommand{\XxX}{\EuScript{X}}
\newcommand{\mM}{\mathfrak{m}}
\newcommand{\SECT}[1]{\section{#1}\renewcommand{\theequation}{\thesection-\arabic{equation}}\setcounter{equation}{0}}
\newcounter{help}
\newcommand{\ITE}[3]{\ifthenelse{#1}{#2}{#3}}\newcommand{\ITEE}[3]{\ITE{\equal{#1}{#2}}{#3}{}}
\newcommand{\card}{\operatorname{card}}
\newcommand{\leqsl}{\leqslant}\newcommand{\geqsl}{\geqslant}
\newcommand{\epsi}{\varepsilon}\newcommand{\varempty}{\varnothing}\newcommand{\dd}{\colon}
\newcommand{\nl}{${}$\newline}
\newcommand{\THM}[1]{Theorem~\textup{\ref{thm:#1}}}
\newcommand{\COR}[1]{Corollary~\textup{\ref{cor:#1}}}
\newcommand{\LEM}[1]{Lemma~\textup{\ref{lem:#1}}}\newcommand{\PRO}[1]{Proposition~\textup{\ref{pro:#1}}}
\newcommand{\EXM}[1]{Example~\textup{\ref{exm:#1}}}
\newenvironment{thm}[1]{\begin{Thm}\label{thm:#1}}{\end{Thm}}\newenvironment{lem}[1]{\begin{Lem}\label{lem:#1}}{\end{Lem}}
\newenvironment{cor}[1]{\begin{Cor}\label{cor:#1}}{\end{Cor}}\newenvironment{pro}[1]{\begin{Pro}\label{pro:#1}}{\end{Pro}}
\newenvironment{exm}[1]{\begin{Exm}\label{exm:#1}}{\end{Exm}}
\newenvironment{rem}[1]{\begin{Rem}\label{rem:#1}}{\end{Rem}}
\newcommand{\bibITEM}[2]{\ITE{\equal{#2}{}}{\bibitem{#1} }{\bibitem[#2]{#1} }}
\newcommand{\BIB}[8]{
   \bibITEM{#1}{#8} #2, \textit{#3}, #4{} \textbf{#5} (#6), #7.}
\newcommand{\myBIB}[6][P. Niemiec]{#1, \textit{#2}, #3{}\ITE{\equal{#4}{}}{}{ \textbf{#4}} (#5), #6.}
\newcommand{\BIb}[6]{
   \bibITEM{#1}{#6} #2, \textit{#3}, #4, #5.}
\newcommand{\BiB}[9]{
   \bibITEM{#1}{#9} #2, \textit{#3}, #4{} \textit{#5}, #6, #7, #8.}
\newcommand{\myBAPP}[3][P. Niemiec]{
   #1, \textit{#2}, #3}
\newcommand{\jRN}[2][]{
   \ITEE{#2}{ActaM}{\ITE{\equal{#1}{+}}
      {Acta Mathematica}{Acta Math.}}
   \ITEE{#2}{ActaMSinES}{\ITE{\equal{#1}{+}}
      {Acta Mathematica Sinica (English Series)}{Acta Math. Sin. (Engl. Ser.)}}
   \ITEE{#2}{AdvM}{\ITE{\equal{#1}{+}}
      {Advances in Mathematics}{Adv. in Math.}}
   \ITEE{#2}{ACS}{\ITE{\equal{#1}{+}}
      {Applied Categorical Structures}{Appl. Categor. Struct.}}
   \ITEE{#2}{ActaSM}{\ITE{\equal{#1}{+}}
      {Acta Scientiarum Mathematicarum}{Acta Sci. Math.}}
   \ITEE{#2}{AmJM}{\ITE{\equal{#1}{+}}
      {American Journal of Mathematics}{Amer. J. Math.}}
   \ITEE{#2}{AmMMon}{\ITE{\equal{#1}{+}}
      {American Mathematical Monthly}{Amer. Math. Mon.}}
   \ITEE{#2}{AnnSciEcNormSupT}{\ITE{\equal{#1}{+}}
      {Annales Scientifiques de l'\'{E}cole Normale Sup\'{e}rieure (3)}{Ann. Sci. \'{E}c. Norm. Sup\'{e}r. (3)}}
   \ITEE{#2}{AnnM}{\ITE{\equal{#1}{+}}
      {Annals of Mathematics}{Ann. Math.}}
   \ITEE{#2}{AnnProb}{\ITE{\equal{#1}{+}}
      {The Annals of Probability}{Ann. Probab.}}
   \ITEE{#2}{AnnPALog}{\ITE{\equal{#1}{+}}
      {Annals of Pure and Applied Logic}{Ann. Pure Appl. Logic}}
   \ITEE{#2}{APM}{\ITE{\equal{#1}{+}}
      {Annales Polonici Mathematici}{Ann. Polon. Math.}}
   \ITEE{#2}{ArchM}{\ITE{\equal{#1}{+}}
      {Archiv der Mathematik}{Arch. Math.}}
   \ITEE{#2}{AttiAccLincRendNat}{\ITE{\equal{#1}{+}}
      {Atti della Accademia Nazionale dei Lincei. Rendiconti. Classe di Scienze Fisiche, Matematiche e Naturali}
      {Atti Accad. Naz. Lincei Rend. Cl. Sci. Fis. Mat. Nat.}}
   \ITEE{#2}{BAMS}{\ITE{\equal{#1}{+}}
      {Bulletin of the American Mathematical Society}{Bull. Amer. Math. Soc.}}
   \ITEE{#2}{BAustrMS}{\ITE{\equal{#1}{+}}
      {Bulletin of the Australian Mathematical Society}{Bull. Austral. Math. Soc.}}
   \ITEE{#2}{BLondMS}{\ITE{\equal{#1}{+}}
      {Bulletin of the London Mathematical Sociecy}{Bull. Lond. Math. Soc.}}
   \ITEE{#2}{BAPolSSSM}{\ITE{\equal{#1}{+}}
      {Bulletin de l'Acad\'{e}mie Polonaise des Sciences. S\'{e}rie des Sciences Math\'{e}matiques}
      {Bull. Acad. Pol. Sci. S\'{e}r. Sci. Math.}}
   \ITEE{#2}{BullSM}{\ITE{\equal{#1}{+}}
      {Bulletin des Sciences Math\'{e}matiques}{Bull. Sci. Math.}}
   \ITEE{#2}{BullPol}{\ITE{\equal{#1}{+}}
      {Bulletin of the Polish Academy of Sciences: Mathematics}{Bull. Pol. Acad. Sci. Math.}}
   \ITEE{#2}{CanadJM}{\ITE{\equal{#1}{+}}
      {Canadian Journal Mathematics}{Canad. J. Math.}}
   \ITEE{#2}{CollectM}{\ITE{\equal{#1}{+}}
      {Collectanea Mathematica}{Collect. Math.}}
   \ITEE{#2}{CMUC}{\ITE{\equal{#1}{+}}
      {Commentationes Mathematicae Universitatis Carolinae}{Comment. Math. Univ. Carolin.}}
   \ITEE{#2}{CRParis}{\ITE{\equal{#1}{+}}
      {C. R. Paris}{C. R. Paris}}
   \ITEE{#2}{CRASParis}{\ITE{\equal{#1}{+}}
      {Comptes Rendus de l'Acad\'{e}mie des Sciences. Paris}{C. R. Acad. Sci. Paris}}
   \ITEE{#2}{CEurJM}{\ITE{\equal{#1}{+}}
      {Central European Journal of Mathematics}{Cent. Eur. J. Math.}}
   \ITEE{#2}{CMHelv}{\ITE{\equal{#1}{+}}
      {Commentarii Mathematici Helvetici}{Comment. Math. Helv.}}
   \ITEE{#2}{CollM}{\ITE{\equal{#1}{+}}
      {Colloquium Mathematicum}{Coll. Math.}}
   \ITEE{#2}{ComposM}{\ITE{\equal{#1}{+}}
      {Compositio Mathematica}{Compos. Math.}}
   \ITEE{#2}{CzMJ}{\ITE{\equal{#1}{+}}
      {Czechoslovak Mathematical Journal}{Czech. Math. J.}}
   \ITEE{#2}{DissM}{\ITE{\equal{#1}{+}}
      {Dissertationes Mathematicae}{Dissert. Math.}}
   \ITEE{#2}{DANSSSR}{\ITE{\equal{#1}{+}}
      {Doklady Akademii Nauk SSSR}{Dokl. Akad. Nauk SSSR}}
   \ITEE{#2}{DukeMJ}{\ITE{\equal{#1}{+}}
      {Duke Mathematical Journal}{Duke Math. J.}}
   \ITEE{#2}{ELA}{\ITE{\equal{#1}{+}}
      {The Electronic Journal of Linear Algebra}{Electron. J. Linear Algebra}}
   \ITEE{#2}{ExtrM}{\ITE{\equal{#1}{+}}
      {Extracta Mathematicae}{Extracta Math.}}
   \ITEE{#2}{FM}{\ITE{\equal{#1}{+}}
      {Fundamenta Mathematicae}{Fund. Math.}}
   \ITEE{#2}{FAA}{\ITE{\equal{#1}{+}}
      {Functional Analysis and its Applications}{Funct. Anal. Appl.}}
   \ITEE{#2}{FunkAnalPril}{\ITE{\equal{#1}{+}}
      {Funktsional'ny\u{\i} Analiz i Ego Prilozheniya}{Funkts. Anal. Prilozh.}}
   \ITEE{#2}{GTopA}{\ITE{\equal{#1}{+}}
      {General Topology and its Applications}{General Topol. Appl.}}
   \ITEE{#2}{HJM}{\ITE{\equal{#1}{+}}
      {Houston Journal of Mathematics}{Houston J. Math.}}
   \ITEE{#2}{IllinoisJM}{\ITE{\equal{#1}{+}}
      {Illinois Journal of Mathematics}{Illinois J. Math.}}
   \ITEE{#2}{IndagMP}{\ITE{\equal{#1}{+}}
      {Indagationes Mathematicae (Proceedings)}{Indagationes Math. Proc.}}
   \ITEE{#2}{IndianaUMJ}{\ITE{\equal{#1}{+}}
      {Indiana University Mathematical Journal}{Indiana Univ. Math. J.}}
   \ITEE{#2}{InHauEtSPM}{\ITE{\equal{#1}{+}}
      {Inst. Hautes \'{E}tudes Sci. Publ. Math.}{Inst. Hautes \'{E}tudes Sci. Publ. Math.}}
   \ITEE{#2}{IEOT}{\ITE{\equal{#1}{+}}
      {Integral Equations and Operator Theory}{Integr. Equ. Oper. Theory}}
   \ITEE{#2}{IsraelJM}{\ITE{\equal{#1}{+}}
      {Israel Journal of Mathematics}{Israel J. Math.}}
   \ITEE{#2}{JAusMSA}{\ITE{\equal{#1}{+}}
      {Journal of the Australian Mathematical Society. Series A}{J. Aust. Math. Soc. Ser. A}}
   \ITEE{#2}{JCA}{\ITE{\equal{#1}{+}}
      {Journal of Convex Analysis}{J. Convex Anal.}}
   \ITEE{#2}{JChinUST}{\ITE{\equal{#1}{+}}
      {J. China Univ. Sci. Tech.}{J. China Univ. Sci. Tech.}}
   \ITEE{#2}{JFA}{\ITE{\equal{#1}{+}}
      {Journal of Functional Analysis}{J. Funct. Anal.}}
   \ITEE{#2}{JKoreanMS}{\ITE{\equal{#1}{+}}
      {Journal of the Korean Mathematical Society}{J. Korean Math. Soc.}}
   \ITEE{#2}{JMAnApp}{\ITE{\equal{#1}{+}}
      {J. Math. Anal. Appl.}{J. Math. Anal. Appl.}}
   \ITEE{#2}{JOT}{\ITE{\equal{#1}{+}}
      {Journal of Operator Theory}{J. Operator Theory}}
   \ITEE{#2}{KodaiMSemRep}{\ITE{\equal{#1}{+}}
      {Kodai Math. Sem. Rep.}{Kodai Math. Sem. Rep.}}
   \ITEE{#2}{LAA}{\ITE{\equal{#1}{+}}
      {Linear Algebra and its Applications}{Linear Algebra Appl.}}
   \ITEE{#2}{LMLA}{\ITE{\equal{#1}{+}}
      {Linear and Multilinear Algebra}{Linear Multilinear Algebra}}
   \ITEE{#2}{LNM}{\ITE{\equal{#1}{+}}
      {Lecture Notes in Mathematics}{Lecture Notes Math.}}
   \ITEE{#2}{MathJap}{\ITE{\equal{#1}{+}}
      {Math. Japon.}{Math. Japon.}}
   \ITEE{#2}{MLQ}{\ITE{\equal{#1}{+}}
      {Mathematical Logic Quarterly}{Math. Log. Q.}}
   \ITEE{#2}{MProcCambPhS}{\ITE{\equal{#1}{+}}
      {Mathematical Proceedings of the Cambridge Philosophical Society}{Math. Proc. Cambridge Phil. Soc.}}
   \ITEE{#2}{MMag}{\ITE{\equal{#1}{+}}
      {Mathematics Magazine}{Math. Mag.}}
   \ITEE{#2}{MSb}{\ITE{\equal{#1}{+}}
      {Matematicheski\u{\i} Sbornik}{Mat. Sb.}}
   \ITEE{#2}{MStud}{\ITE{\equal{#1}{+}}
      {Matematychni Studi\"{\i}}{Mat. Stud.}}
   \ITEE{#2}{MScand}{\ITE{\equal{#1}{+}}
      {Mathematica Scandinavica}{Math. Scand.}}
   \ITEE{#2}{MAnn}{\ITE{\equal{#1}{+}}
      {Mathematische Annalen}{Math. Ann.}}
   \ITEE{#2}{MAMS}{\ITE{\equal{#1}{+}}
      {Memoirs of the American Mathematical Society}{Mem. Amer. Math. Soc.}}
   \ITEE{#2}{MichMJ}{\ITE{\equal{#1}{+}}
      {Michigan Mathematical Journal}{Mich. Math. J.}}
   \ITEE{#2}{MonatM}{\ITE{\equal{#1}{+}}
      {Monatshefte f\"{u}r Mathematik}{Mh. Math.}}
   \ITEE{#2}{NonlinA}{\ITE{\equal{#1}{+}}
      {Nonlinear Analysis: Theory, Methods \& Applications}{Nonlinear Anal.}}
   \ITEE{#2}{NAMS}{\ITE{\equal{#1}{+}}
      {Notices of the American Mathematical Society}{Notices Amer. Math. Soc.}}
   \ITEE{#2}{OpusM}{\ITE{\equal{#1}{+}}
      {Opuscula Mathematica}{Opuscula Math.}}
   \ITEE{#2}{PacJM}{\ITE{\equal{#1}{+}}
      {Pacific Journal of Mathematics}{Pacific J. Math.}}
   \ITEE{#2}{PeriodMHung}{\ITE{\equal{#1}{+}}
      {Periodica Mathematica Hungarica}{Period. Math. Hungarica}}
   \ITEE{#2}{PAMS}{\ITE{\equal{#1}{+}}
      {Proceedings of the American Mathematical Society}{Proc. Amer. Math. Soc.}}
   \ITEE{#2}{ProcCambPhS}{\ITE{\equal{#1}{+}}
      {Proceedings of the Cambridge Philosophical Society}{Proc. Cambridge Phil. Soc.}}
   \ITEE{#2}{ProcImpAcadTokyo}{\ITE{\equal{#1}{+}}
      {Proc. Imp. Acad. Tokyo}{Proc. Imp. Acad. Tokyo}}
   \ITEE{#2}{ProcKonink}{\ITE{\equal{#1}{+}}
      {Proceedings of the Koninklijke Nederlandse Akademie van Wetenschappen}{Nederl. Akad. Wetensch. Proc. Ser. A}}
   \ITEE{#2}{PLondMS}{\ITE{\equal{#1}{+}}
      {Proceedings of the London Mathematical Society}{Proc. London Math. Soc.}}
   \ITEE{#2}{PNatlUSA}{\ITE{\equal{#1}{+}}
      {Proceedings of the National Academy of Sciences of the United States of America}{Proc. Natl. Acad. Sci. USA}}
   \ITEE{#2}{PublRIMSKyoto}{\ITE{\equal{#1}{+}}
      {Publ. Res. Inst. Math. Sci. Kyoto Univ.}{Publ. Res. Inst. Math. Sci.}}
   \ITEE{#2}{PWN}{\ITE{\equal{#1}{+}}
      {PWN -- Polish Scientific Publishers, Warszawa}{PWN -- Polish Scientific Publishers, Warszawa}}
   \ITEE{#2}{RCMP}{\ITE{\equal{#1}{+}}
      {Rendiconti del Circolo Matematico di Palermo}{Rend. Circ. Mat. Palermo}}
   \ITEE{#2}{RussMS}{\ITE{\equal{#1}{+}}
      {Russian Mathematical Surveys}{Russian Math. Surveys}}
   \ITEE{#2}{SbM}{\ITE{\equal{#1}{+}}
      {Sbornik: Mathematics}{Sb. Math.}}
   \ITEE{#2}{SciRepTokyoA}{\ITE{\equal{#1}{+}}
      {Science Reports of Tokyo Kyoiku Daigaku, Section A}{Sci. Rep. Tokyo Kyoiku Daigaku Sect. A}}
   \ITEE{#2}{SeminProbStras}{\ITE{\equal{#1}{+}}
      {S\'{e}minaire de probabilit\'{e}s de Strasbourg}{S\'{e}min. Prob. Strasbourg}}
   \ITEE{#2}{SIAMJMAA}{\ITE{\equal{#1}{+}}
      {SIAM Journal on Matrix Analysis and Applications}{SIAM J. Matrix Anal. Appl.}}
   \ITEE{#2}{SibirMZ}{\ITE{\equal{#1}{+}}
      {Sibirski\v{\i} Mat. \v{Z}hurnal}{Sibirsk. Mat. \v{Z}.}}
   \ITEE{#2}{SM}{\ITE{\equal{#1}{+}}
      {Studia Mathematica}{Studia Math.}}
   \ITEE{#2}{TAMS}{\ITE{\equal{#1}{+}}
      {Transactions of the American Mathematical Society}{Trans. Amer. Math. Soc.}}
   \ITEE{#2}{TohokuMJ}{\ITE{\equal{#1}{+}}
      {T\^{o}hoku Mathematical Journal}{T\^{o}hoku Math. J.}}
   \ITEE{#2}{TomskUnivRev}{\ITE{\equal{#1}{+}}
      {Tomsk Universitet Review}{Tomsk. Univ. Rev.}}
   \ITEE{#2}{TopA}{\ITE{\equal{#1}{+}}
      {Topology and its Applications}{Topology Appl.}}
   \ITEE{#2}{TopMethNA}{\ITE{\equal{#1}{+}}
      {Topological Methods in Nonlinear Analysis}{Topol. Methods Nonlinear Anal.}}
   \ITEE{#2}{TsukubaJM}{\ITE{\equal{#1}{+}}
      {Tsukuba Journal of Mathematics}{Tsukuba J. Math.}}
   \ITEE{#2}{UspekhiMN}{\ITE{\equal{#1}{+}}
      {Uspekhi Matem. Nauk}{Uspekhi Mat. Nauk}}
   }
\newcommand{\paplist}[3][]{
   \ITEE{#3}{NIAkhiezer,IMGlazman1993}{
      \BIb{#2}{N.I. Akhiezer and I.M. Glazman}
         {Theory of Linear Operators in Hilbert Space}
         {Dover Publications, Inc., New York}{1993}{#1}}
   \ITEE{#3}{RDAnderson1967}{
      \BIB{#2}{R.D. Anderson}
         {On topological infinite deficiency}
         {\jRN{MichMJ}}{14}{1967}{365--383}{#1}}
   \ITEE{#3}{RDAnderson,JMcCharen1970}{
      \BIB{#2}{R.D. Anderson and J. McCharen}
         {On extending homeomorphisms to Fr\'{e}chet manifolds}
         {\jRN{PAMS}}{25}{1970}{283--289}{#1}}
   \ITEE{#3}{RDAnderson,DWCurtis,JVanMill1982}{
      \BIB{#2}{R.D. Anderson, D.W. Curtis, J. van Mill}
         {A fake topological Hilbert space}
         {\jRN{TAMS}}{272}{1982}{311--321}{#1}}
   \ITEE{#3}{RArens,JEells1956}{
      \BIB{#2}{R. Arens and J. Eells}
         {On embedding uniform and topological spaces}
         {\jRN{PacJM}}{6}{1956}{397--403}{#1}}
   \ITEE{#3}{NAronszajn,PPanitchpakdi1956}{
      \BIB{#2}{N. Aronszajn and P. Panitchpakdi}
         {Extension of uniformly continuous transformations and hyperconvex metric spaces}
         {\jRN{PacJM}}{6}{1956}{405--439}{#1}}
   \ITEE{#3}{KJBabenko1948}{
      \BIB{#2}{K.J. Babenko}
         {On conjugate functions}
         {\jRN{DANSSSR}}{62}{1948}{157--160}{#1}}
   \ITEE{#3}{TBanakh1995}{
      \BIB{#2}{T.O. Banakh}
         {Topology of spaces of probability measures, I}
         {\jRN{MStud}}{5}{1995}{65--87 (Russian)}{#1}}
   \ITEE{#3}{TBanakh1995a}{
      \BIB{#2}{T.O. Banakh}
         {Topology of spaces of probability measures, II}
         {\jRN{MStud}}{5}{1995}{88--106 (Russian)}{#1}}
   \ITEE{#3}{TBanakh1998}{
      \BIB{#2}{T. Banakh}
         {Characterization of spaces admitting a homotopy dense embedding into a Hilbert manifold}
         {\jRN{TopA}}{86}{1998}{123--131}{#1}}
   \ITEE{#3}{TBanakh,TNRadul1997}{
      \BIB{#2}{T.O. Banakh and T.N. Radul}
         {Topology of spaces of probability measures}
         {\jRN{SbM}}{188}{1997}{973--995}{#1}}
   \ITEE{#3}{TBanakh,TRadul,MZarichnyi1996}{
      \BIb{#2}{T. Banakh, T. Radul, M. Zarichnyi}
         {Absorbing sets in infinite-dimensional manifolds}
         {VNTL Publishers, Lviv}{1996}{#1}}
   \ITEE{#3}{TBanakh,IZarichnyy2008}{
      \BIB{#2}{T. Banakh and I. Zarichnyy}
         {Topological groups and convex sets homeomorphic to non-separable Hilbert spaces}
         {\jRN{CEurJM}}{6}{2008}{77--86}{#1}}
   \ITEE{#3}{HBecker,ASKechris1996}{
      \BIb{#2}{H. Becker and A.S. Kechris}
         {The Descriptive Set Theory of Polish Group Actions \textup{(London Math. Soc. Lecture Note Series, vol. 232)}}
         {University Press, Cambridge}{1996}{#1}}
   \ITEE{#3}{GBeer1993}{
      \BIb{#2}{G. Beer}
         {Topologies on Closed and Closed Convex Sets \textup{(Mathematics and Its Applications)}}
         {Kluwer Academic Publishers, Dordrecht}{1993}{#1}}
   \ITEE{#3}{NEBenamara,NNikolski1999}{
      \BIB{#2}{N.E. Benamara and N. Nikolski}
         {Resolvent tests for similarity to a normal operator}
         {\jRN{PLondMS}}{78}{1999}{585--626}{#1}}
   \ITEE{#3}{YBenyamini,JLindenstrauss2000}{
      \BIb{#2}{Y. Benyamini and J. Lindenstrauss}
         {Geometric nonlinear functional analysis I}
         {AMS Colloquium Publications 48}{2000}{#1}}
   \ITEE{#3}{SKBerberian1974}{
      \BIb{#2}{S.K. Berberian}
         {Lectures in Functional Analysis and Operator Theory}
         {Graduate Texts in Mathematics 15, Springer-Verlag, New York}{1974}{#1}}
   \ITEE{#3}{SNBernstein1954}{
      \BIb{#2}{S.N. Bernstein}
         {Collected Works II}
         {Akad. Nauk SSSR, Moscow}{1954 (Russian)}{#1}}
   \ITEE{#3}{CzBessaga,APelczynski1972}{
      \BIB{#2}{Cz. Bessaga and A. Pe\l{}czy\'{n}ski}
         {On spaces of measurable functions}
         {\jRN{SM}}{44}{1972}{597--615}{#1}}
   \ITEE{#3}{CzBessaga,APelczynski1975}{
      \BIb{#2}{Cz. Bessaga and A. Pe\l{}czy\'{n}ski}
         {Selected topics in infinite-dimensional topology}
         {\jRN{PWN}}{1975}{#1}}
   \ITEE{#3}{MBestvina,JMogilski1986}{
      \BIB{#2}{M. Bestvina and J. Mogilski}
         {Characterizing certain incomplete infinite-dimensional absolute retracts}
         {\jRN{MichMJ}}{33}{1986}{291--313}{#1}}
   \ITEE{#3}{MBestvina,PBowers,JMogilsky,JWalsh1986}{
      \BIB{#2}{M. Bestvina, P. Bowers, J. Mogilsky, J. Walsh}
         {Characterization of Hilbert space manifolds revisited}
         {\jRN{TopA}}{24}{1986}{53--69}{#1}}
   \ITEE{#3}{RBhatia1997}{
      \BIb{#2}{R. Bhatia}
         {Matrix Analysis}
         {Springer, New York}{1997}{#1}}
   \ITEE{#3}{GBirkhoff1936}{
      \BIB{#2}{G. Birkhoff}
         {A note on topological groups}
         {\jRN{ComposM}}{3}{1936}{427--430}{#1}}
   \ITEE{#3}{MSBirman,MZSolomjak1987}{
      \BIb{#2}{M.S. Birman and M.Z. Solomjak}
         {Spectral Theory of Self-Adjoint Operators in Hilbert Space}
         {D. Reidel Publishing Co., Dordrecht}{1987}{#1}}
   \ITEE{#3}{EBishop1961}{
      \BIB{#2}{E. Bishop}
         {A generalization of the Stone-Weierstrass theorem}
         {\jRN{PacJM}}{11}{1961}{777--783}{#1}}
   \ITEE{#3}{BBlackadar2006}{
      \BIb{#2}{B. Blackadar}{Operator Algebras. Theory of $\CCc^*$-algebras and von Neumann algebras 
         \textup{(Encyclopaedia of Mathematical Sciences, vol. 122: Operator Algebras and Non-Commutative Geometry III)}}
         {Springer-Verlag, Berlin-Heidelberg}{2006}{#1}}
   \ITEE{#3}{JBlass,WHolsztynski1972}{
      \BIB{#2}{J. Blass and W. Holszty\'{n}ski}
         {Cubical polyhedra and homotopy III}
         {\jRN{AttiAccLincRendNat}}{53}{1972}{275--279}{#1}}
   \ITEE{#3}{FFBonsall,NJDuncan1973}{
      \BIb{#2}{F.F. Bonsall and N.J. Duncan}
         {Complete Normed Algebras}
         {Springer Verlag, Berlin}{1973}{#1}}
   \ITEE{#3}{NBourbaki2002}{
      \BIb{#2}{N. Bourbaki}
         {Lie Groups and Lie Algebras, Chapters 4--6}
         {Springer, New York}{2002}{#1}}
   \ITEE{#3}{PLBowers1989}{
      \BIB{#2}{P.L. Bowers}
         {Limitation topologies on function spaces}
         {\jRN{TAMS}}{314}{1989}{421--431}{#1}}
   \ITEE{#3}{ABrown1953}{
      \BIB{#2}{A. Brown}
         {On a class of operators}
         {\jRN{PAMS}}{4}{1953}{723--728}{#1}}
   \ITEE{#3}{ABrown,CKFong,DWHadwin1978}{
      \BIB{#2}{A. Brown, C.-K. Fong, D.W. Hadwin}
         {Parts of operators on Hilbert space}
         {\jRN{IllinoisJM}}{22}{1978}{306--314}{#1}}
   \ITEE{#3}{AMBruckner,JBBruckner,BSThomson1997}{
      \BIb{#2}{A.M. Bruckner, J.B. Bruckner, B.S. Thomson}
         {Real Analysis}
         {Prentice-Hall, New Jersey}{1997}{#1}}
   \ITEE{#3}{PJCameron,AMVershik2006}{
      \BIB{#2}{P.J. Cameron and A.M. Vershik}
         {Some isometry groups of Urysohn space}
         {\jRN{AnnPALog}}{143}{2006}{70--78}{#1}}
   \ITEE{#3}{CCastaing1966}{
      \BIB{#2}{C. Castaing}
         {Quelques probl\`{e}mes de mesurabilit\'{e} li\'{e}es \`{a} la th\'{e}orie de la commande}
         {\jRN{CRParis}}{262}{1966}{409--411}{#1}}
   \ITEE{#3}{JAVanCasteren1980}{
      \BIB{#2}{J.A. van Casteren}
         {A problem of Sz.-Nagy}
         {\jRN{ActaSM}}{42}{1980}{189--194}{#1}}
   \ITEE{#3}{JAVanCasteren1983}{
      \BIB{#2}{J.A. van Casteren}
         {Operators similar to unitary or selfadjoint ones}
         {\jRN{PacJM}}{104}{1983}{241--255}{#1}}
   \ITEE{#3}{XCatepillan,MPtak,WSzymanski1994}{
      \BIB{#2}{X. Catepill\'{a}n, M. Ptak, W. Szyma\'{n}ski}
         {Multiple canonical decompositions of families of operators and a model of quasinormal families}
         {\jRN{PAMS}}{121}{1994}{1165--1172}{#1}}
   \ITEE{#3}{RCauty1994}{
      \BIB{#2}{R. Cauty}
         {Un espace m\'{e}trique lin\'{e}aire qui n'est pas un r\'{e}tracte absolu}
         {\jRN{FM}}{146}{1994}{85--99, (French)}{#1}}
   \ITEE{#3}{TAChapman1971}{
      \BIB{#2}{T.A. Chapman}
         {Deficiency in infinite-dimensional manifolds}
         {\jRN{GTopA}}{1}{1971}{263--272}{#1}}
   \ITEE{#3}{TAChapman1976}{
      \BIb{#2}{T.A. Chapman}
         {Lectures on Hilbert cube manifolds}
         {C.B.M.S. Regional Conference Series in Math. No 28, Amer. Math. Soc.}{1976}{#1}}
   \ITEE{#3}{RBChuaqui1977}{
      \BIB{#2}{R.B. Chuaqui}
         {Measures invariant under a group of transformations}
         {\jRN{PacJM}}{68}{1977}{313--329}{#1}}
   \ITEE{#3}{JBConway1985}{
      \BIb{#2}{J.B. Conway}
         {A Course in Functional Analysis}
         {Springer-Verlag, New York}{1985}{#1}}
   \ITEE{#3}{JBConway2000}{
      \BIb{#2}{J.B. Conway}
         {A Course in Operator Theory}
         {(Graduate Studies in Mathematics, vol. 21) Amer. Math. Soc., Providence}{2000}{#1}}
   \ITEE{#3}{GCorach,AMaestripieri,MMbekhta2009}{
      \BIB{#2}{G. Corach, A. Maestripieri, M. Mbekhta}
         {Metric and homogeneous structure of closed range operators}
         {\jRN{JOT}}{61}{2009}{171--190}{#1}}
   \ITEE{#3}{MJCowen,RGDouglas1978}{
      \BIB{#2}{M.J. Cowen and R.G. Douglas}
         {Complex geometry and operator theory}
         {\jRN{ActaM}}{141}{1978}{187--261}{#1}}
   \ITEE{#3}{DWCurtis1985}{
      \BIB{#2}{D.W. Curtis}
         {Boundary sets in the Hilbert cube}
         {\jRN{TopA}}{20}{1985}{201--221}{#1}}
   \ITEE{#3}{MMDay1958}{
      \BIb{#2}{M.M. Day}
         {Normed Linear Spaces}
         {Springer Verlag, Berlin}{1958}{#1}}
   \ITEE{#3}{CDellacherie1967}{
      \BIB{#2}{C. Dellacherie}
         {Un compl\'{e}ment au th\'{e}or\`{e}me de Weierstrass-Stone}
         {\jRN{SeminProbStras}}{1}{1967}{52--53}{#1}}
   \ITEE{#3}{JJDijkstra1987}{
      \BIB{#2}{J.J. Dijkstra}
         {Strong negligibility of $\sigma$-compacta does not characterize Hilbert space}
         {\jRN{PacJM}}{127}{1987}{19--30}{#1}}
   \ITEE{#3}{JJDijkstra1990}{
      \BIB{#2}{J.J. Dijkstra}
         {Characterizing Hilbert space topology in terms of strong negligibility}
         {\jRN{ComposM}}{75}{1990}{299--306}{#1}}
   \ITEE{#3}{TDobrowolski,WMarciszewski2002}{
      \BIB{#2}{T. Dobrowolski and W. Marciszewski}
         {Failure of the Factor Theorem for Borel pre-Hilbert spaces}
         {\jRN{FM}}{175}{2002}{53--68}{#1}}
   \ITEE{#3}{TDobrowolski,JMogilski1990}{
      \BiB{#2}{T. Dobrowolski and J. Mogilski}
         {Problems on Topological Classification of Incomplete Metric Spaces}{Chapter 25 in:}
         {Open Problems in Topology}{J. van Mill and G.M. Reed (eds.), North-Holland Amsterdam}{1990}{411--429}{#1}}
   \ITEE{#3}{TDobrowolski,HTorunczyk1981}{
      \BIB{#2}{T. Dobrowolski and H. Toru\'{n}czyk}
         {Separable complete ANR's admitting a group structure are Hilbert manifolds}
         {\jRN{TopA}}{12}{1981}{229--235}{#1}}
   \ITEE{#3}{RGDouglas1966}{
      \BIB{#2}{R.G. Douglas}
         {On majorization, factorization and range inclusion of operators in Hilbert space}
         {\jRN{PAMS}}{17}{1966}{413--416}{#1}}
   \ITEE{#3}{CHDowker1947}{
      \BIB{#2}{C.H. Dowker}
         {Mapping theorems for non-compact spaces}
         {\jRN{AmJM}}{69}{1947}{200--242}{#1}}
   \ITEE{#3}{CHDowker1952}{
      \BIB{#2}{C.H. Dowker}
         {Topology of metric complexes}
         {\jRN{AmJM}}{74}{1952}{555--577}{#1}}
   \ITEE{#3}{JDugundji1951}{
      \BIB{#2}{J. Dugundji}
         {An extension of Tietze's theorem}
         {\jRN{PacJM}}{1}{1951}{353--367}{#1}}
   \ITEE{#3}{JDugundji1958}{
      \BIB{#2}{J. Dugundji}
         {Absolute neighborhood retracts and local connectedness for arbitrary metric spaces}
         {\jRN{ComposM}}{13}{1958}{229--246}{#1}}
   \ITEE{#3}{JDugundji1965}{
      \BIB{#2}{J. Dugundji}
         {Locally equiconnected spaces and absolute neighborhood retracts}
         {\jRN{FM}}{57}{1965}{187--193}{#1}}
   \ITEE{#3}{NDunford,JTSchwartz1958}{
      \BIb{#2}{N. Dunford and J.T. Schwartz}
         {Linear Operators, part I}
         {Interscience Publishers, New York}{1958}{#1}}
   \ITEE{#3}{NDunford,JTSchwartz1963}{
      \BIb{#2}{N. Dunford and J.T. Schwartz}
         {Linear Operators, part II}
         {Interscience Publishers, New York}{1963}{#1}}
   \ITEE{#3}{NDunford,JTSchwartz1971}{
      \BIb{#2}{N. Dunford and J.T. Schwartz}
         {Linear Operators, part III}
         {Wiley-Interscience, New York}{1971}{#1}}
   \ITEE{#3}{MLEaton,MDPerlman1977}{
      \BIB{#2}{M.L. Eaton and M.D. Perlman}
         {Reflection groups, generalized Schur functions and the geometry of majorization}
         {\jRN{AnnProb}}{5}{1977}{829--860}{#1}}
   \ITEE{#3}{EGEffros1965}{
      \BIB{#2}{E.G. Effros}
         {The Borel space of von Neumann algebras on a separable Hilbert space}
         {\jRN{PacJM}}{15}{1965}{1153--1164}{#1}}
   \ITEE{#3}{EGEffros1966}{
      \BIB{#2}{E.G. Effros}
         {Global structure in von Neumann algebras}
         {\jRN{TAMS}}{121}{1966}{434--454}{#1}}
   \ITEE{#3}{REspinola,MAKhamsi2001}{
      \BiB{#2}{R. Espinola and M.A. Khamsi}
         {Introduction to hyperconvex spaces}{Chapter XIII in:}{Handbook of Metric Fixed Point Theory}
         {W.A. Kirk and B. Sims (editors), Kluwer Academic Publishers}{2001}{391--435}{#1}}
   \ITEE{#3}{PAFillmore,JPWilliams1971}{
      \BIB{#2}{P.A. Fillmore and J.P. Williams}
         {On operator ranges}
         {\jRN{AdvM}}{7}{1971}{254--281}{#1}}
   \ITEE{#3}{JEells,NHKuiper1969}{
      \BIB{#2}{J. Eells and N.H. Kuiper}
         {Homotopy negligible subsets in infinite-dimensional manifolds}
         {\jRN{ComposM}}{21}{1969}{151--161}{#1}}
   \ITEE{#3}{REngelking1977}{
      \BIb{#2}{R. Engelking}
         {General Topology}
         {\jRN{PWN}}{1977}{#1}}
   \ITEE{#3}{REngelking1978}{
      \BIb{#2}{R. Engelking}
         {Dimension Theory}
         {\jRN{PWN}}{1978}{#1}}
   \ITEE{#3}{REngelking1989}{
      \BIb{#2}{R. Engelking}
         {General Topology. Revised and completed edition \textup{(Sigma series in pure mathematics, vol. 6)}}
         {Heldermann Verlag, Berlin}{1989}{#1}}
   \ITEE{#3}{PErdos,RDMauldin1976}{
      \BIB{#2}{P. Erd\"{o}s and R.D. Mauldin}
         {The nonexistence of certain invariant measures}
         {\jRN{PAMS}}{59}{1976}{321--322}{#1}}
   \ITEE{#3}{JErnest1976}{
      \BIB{#2}{J. Ernest}
         {Charting the operator terrain}
         {\jRN{MAMS}}{171}{1976}{207 pp}{#1}}
   \ITEE{#3}{RHFox1943}{
      \BIB{#2}{R.H. Fox}
         {On fiber spaces, II}
         {\jRN{BAMS}}{49}{1943}{733--735}{#1}}
   \ITEE{#3}{NAFriedman1970}{
      \BIb{#2}{N.A. Friedman}
         {Introduction to ergodic theory}
         {Van Nostrand Reinhold Company}{1970}{#1}}
   \ITEE{#3}{MFujii,MKajiwara,YKato,FKubo1976}{
      \BIB{#2}{M. Fujii, M. Kajiwara, Y. Kato, F. Kubo}
         {Decompositions of operators in Hilbert spaces}
         {\jRN{MathJap}}{21}{1976}{117--120}{#1}}
   \ITEE{#3}{SGao,ASKechris2003}{
      \BIB{#2}{S. Gao and A.S. Kechris}
         {On the classification of Polish metric spaces up to isometry}
         {\jRN{MAMS}}{161}{2003}{viii+78}{#1}}
   \ITEE{#3}{MIGarrido,FMontalvo1991}{
      \BIB{#2}{M.I. Garrido and F. Montalvo}
         {On some generalizations of the Kakutani-Stone and Stone-Weierstrass theorems}
         {\jRN{ExtrM}}{6}{1991}{156--159}{#1}}
   \ITEE{#3}{LGe,JShen2002}{
      \BIB{#2}{L. Ge and J. Shen}
         {Generator problem for certain property T factors}
         {\jRN{PNAS}}{99}{2002}{565--567}{#1}}
   \ITEE{#3}{IMGelfand,MANaimark1943}{
      \BIB{#2}{I.M. Gelfand and M.A. Naimark}
         {On the embedding of normed rings into the ring of operators in Hilbert space}
         {\jRN{MSb}}{12}{1943}{197--213}{#1}}
   \ITEE{#3}{FGesztesy,MMalamud,MMitrea,SNaboko2009}{
      \BIB{#2}{F. Gesztesy, M. Malamud, M. Mitrea, S. Naboko}
         {Generalized polar decompositions for closed operators in Hilbert spaces and some applications}
         {\jRN{IEOT}}{64}{2009}{83--113}{#1}}
   \ITEE{#3}{LGillman,MJerison1960}{
      \BIb{#2}{L. Gillman and M. Jerison}
         {Rings of continuous functions}
         {New York}{1960}{#1}}
   \ITEE{#3}{JGlimm1960}{
      \BIB{#2}{J. Glimm}
         {A Stone-Weierstrass theorem for $\CCc^*$-algebras}
         {\jRN{AnnM}}{72}{1960}{216--244}{#1}}
   \ITEE{#3}{GGodefroy,NJKalton2003}{
      \BIB{#2}{G. Godefroy and N.J. Kalton}
         {Lipschitz-free Banach spaces}
         {\jRN{SM}}{159}{2003}{121--141}{#1}}
   \ITEE{#3}{ICGohberg,MGKrein1967}{
      \BIB{#2}{I.C. Gohberg and M.G. Krein}
         {On a description of contraction operators similar to unitary ones}
         {\jRN{FunkAnalPril}}{1}{1967}{38--60}{#1}}
   \ITEE{#3}{ELGriffinJr1953}{
      \BIB{#2}{E.L. Griffin Jr.}
         {Some contributions to the theory of rings of operators}
         {\jRN{TAMS}}{75}{1953}{471--504}{#1}}
   \ITEE{#3}{ELGriffinJr1955}{
      \BIB{#2}{E.L. Griffin Jr.}
         {Some contributions to the theory of rings of operators II}
         {\jRN{TAMS}}{79}{1955}{389--400}{#1}}
   \ITEE{#3}{MGromov1981}{
      \BIB{#2}{M. Gromov}
         {Groups of polynomial growth and expanding maps}
         {\jRN{InHauEtSPM}}{53}{1981}{53--73}{#1}}
   \ITEE{#3}{MGromov1999}{
      \BIb{#2}{M. Gromov}
         {Metric Structures for Riemannian and Non-Riemannian Spaces}
         {Progress in Math. \textbf{152}, Birkh\"{a}user}{1999}{#1}}
   \ITEE{#3}{JDeGroot1956}{
      \BIB{#2}{J. de Groot}
         {Non-archimedean metrics in topology}
         {\jRN{PAMS}}{7}{1956}{948--953}{#1}}
   \ITEE{#3}{LCGrove,CTBenson1985}{
      \BIb{#2}{L.C. Grove and C.T. Benson}
         {Finite Reflection Group}
         {2nd ed., Springer-Verlag}{1985}{#1}}
   \ITEE{#3}{VIGurarii1966}{
      \BIB{#2}{V.I. Gurari\v{\i}}{Spaces of universal placement, isotropic spaces and a problem of Mazur 
         on rotations of Banach spaces \textup{(Russian)}}
         {\jRN{SibirMZ}}{7}{1966}{1002--1013}{#1}}
   \ITEE{#3}{DWHadwin1976}{
      \BIB{#2}{D.W. Hadwin}
         {An operator-valued spectrum}
         {\jRN{NAMS}}{23}{1976}{A-163}{#1}}
   \ITEE{#3}{DWHadwin1977}{
      \BIB{#2}{D.W. Hadwin}
         {An operator-valued spectrum}
         {\jRN{IndianaUMJ}}{26}{1977}{329--340}{#1}}
   \ITEE{#3}{HHahn1932}{
      \BIb{#2}{H. Hahn}
         {Reelle Funktionen I}
         {Leipzig}{1932}{#1}}
   \ITEE{#3}{PRHalmos1950}{
      \BIb{#2}{P.R. Halmos}
         {Measure theory}
         {Van Nostrand, New York}{1950}{#1}}
   \ITEE{#3}{PRHalmos1951}{
      \BIb{#2}{P.R. Halmos}
         {Introduction to Hilbert Space and the Theory of Spectral Multiplicity}
         {Chelsea Publishing Company, New York}{1951}{#1}}
   \ITEE{#3}{PRHalmos1956}{
      \BIb{#2}{P.R. Halmos}
         {Lectures on Ergodic Theory}
         {Publ. Math. Soc. Japan, Tokyo}{1956}{#1}}
   \ITEE{#3}{PRHalmos1982}{
      \BIb{#2}{P.R. Halmos}
         {A Hilbert Space Problem Book}
         {Springer-Verlag New York Inc.}{1982}{#1}}
  \ITEE{#3}{PRHalmos,JEMcLaughlin1963}{
      \BIB{#2}{P.R. Halmos and J.E. McLaughlin}
         {Partial isometries}
         {\jRN{PacJM}}{13}{1963}{585--596}{#1}}
   \ITEE{#3}{RWHansell1972}{
      \BIB{#2}{R.W. Hansell}
         {On the nonseparable theory of Borel and Souslin sets}
         {\jRN{BAMS}}{78}{1972}{236--241}{#1}}
   \ITEE{#3}{FHausdorff1930}{
      \BIB{#2}{F. Hausdorff}
         {Erweiterung einer Hom\"{o}omorphie}
         {\jRN{FM}}{16}{1930}{353--360}{#1}}
   \ITEE{#3}{FHausdorff1934}{
      \BIB{#2}{F. Hausdorff}
         {\"{U}ber innere Abbildungen}
         {\jRN{FM}}{23}{1934}{279--291}{#1}}
   \ITEE{#3}{FHausdorff1938}{
      \BIB{#2}{F. Hausdorff}
         {Erweiterung einer stetigen Abbildung}
         {\jRN{FM}}{30}{1938}{40--47}{#1}}
   \ITEE{#3}{DWHenderson1971}{
      \BIB{#2}{D.W. Henderson}
         {Corrections and extensions of two papers about infinite-dimensional manifolds}
         {\jRN{GTopA}}{1}{1971}{321--327}{#1}}
   \ITEE{#3}{DWHenderson1975}{
      \BIB{#2}{D.W. Henderson}
         {$Z$-sets in ANR's}
         {\jRN{TAMS}}{213}{1975}{205--216}{#1}}
   \ITEE{#3}{DWHenderson,RMSchori1970}{
      \BIB{#2}{D.W. Henderson and R.M. Schori}
         {Topological classification of infinite-dimensional manifolds by homotopy type}
         {\jRN{BAMS}}{76}{1970}{121--124}{#1}}
   \ITEE{#3}{DWHenderson,JEWest1970}{
      \BIB{#2}{D.W. Henderson and J.E. West}
         {Triangulated infinite-dimensional manifolds}
         {\jRN{BAMS}}{76}{1970}{655--660}{#1}}
   \ITEE{#3}{BHoffmann1979}{
      \BIB{#2}{B. Hoffmann}
         {A compact contractible topological group is trivial}
         {\jRN{ArchM}}{32}{1979}{585--587}{#1}}
   \ITEE{#3}{DHofmann2002}{
      \BIB{#2}{D. Hofmann}
         {On a generalization of the Stone-Weierstrass theorem}
         {\jRN{ACS}}{10}{2002}{569--592}{#1}}
   \ITEE{#3}{GHognas,AMukherjea1995}{
      \BIb{#2}{G. H\"ogn\"as and A. Mukherjea}
         {Probability Measures on Semigroups. Convolution Products, Random Walks, and Random Matrices}
         {Plenum Press, New York}{1995}{#1}}
   \ITEE{#3}{MRHolmes1992}{
      \BIB{#2}{M.R. Holmes}
         {The universal separable metric space of Urysohn and isometric embeddings thereof in Banach spaces}
         {\jRN{FM}}{140}{1992}{199--223}{#1}}
   \ITEE{#3}{MRHolmes2008}{
      \BIB{#2}{M.R. Holmes}
         {The Urysohn space embeds in Banach spaces in just one way}
         {\jRN{TopA}}{155}{2008}{1479--1482}{#1}}
   \ITEE{#3}{RRHolmes,TYTam1999}{
      \BIB{#2}{R.R. Holmes and T.Y. Tam}
         {Distance to the convex hull of an orbit under the action of a compact group}
         {\jRN{JAusMSA}}{66}{1999}{331--357}{#1}}
   \ITEE{#3}{RHorn,RMathias1990}{
      \BIB{#2}{R. Horn and R. Mathias}
         {Cauchy-Schwartz inequalities associated with positive semidefinite matrices}
         {\jRN{LAA}}{142}{1990}{63--82}{#1}}
   \ITEE{#3}{GEHuhunaisvili1955}{
      \BIB{#2}{G.E. Huhunai\v{s}vili}
         {On a property of Urysohn's universal metric space}
         {\jRN{DANSSSR}}{101}{1955}{607--610 (Russian)}{#1}}
   \ITEE{#3}{JEHumphreys1990}{
      \BIb{#2}{J.E. Humphreys}
         {Reflection Groups and Coxeter Groups}
         {Cambridge University Press}{1990}{#1}}
   \ITEE{#3}{JRIsbell1964}{
      \BIB{#2}{J.R. Isbell}
         {Six theorems about injective metric spaces}
         {\jRN{CMHelv}}{39}{1964}{65--76}{#1}}
   \ITEE{#3}{SIzumino,YKato1985}{
      \BIB{#2}{S. Izumino and Y. Kato}
         {The closure of invertible operators on Hilbert space}
         {\jRN{ActaSM}}{49}{1985}{321--327}{#1}}
   \ITEE{#3}{CJiang2004}{
      \BIB{#2}{C. Jiang}
         {Similarity classification of Cowen-Douglas operators}
         {\jRN{CanadJM}}{56}{2004}{742--775}{#1}}
   \ITEE{#3}{WBJohnson,JLindenstrauss2001}{
      \BiB{#2}{W.B. Johnson and J. Lindenstrauss}{Basic Concepts in the Geometry of Banach Spaces}
         {Chapter 1 in:}{Handbook of the Geometry of Banach Spaces, Vol. 1}
         {W.B. Johnson and J. Lindenstrauss (editors), Elsevier Science B.V., Amsterdam}{2001}{1--84}{#1}}
   \ITEE{#3}{IBJung,JStochel2008}{
      \BIB{#2}{I.B. Jung and J. Stochel}
         {Subnormal operators whose adjoints have rich point spectrum}
         {\jRN{JFA}}{255}{2008}{1797--1816}{#1}}
   \ITEE{#3}{RVKadison,JRRingrose1983}{
      \BIb{#2}{R.V. Kadison and J.R. Ringrose}
         {Fundamentals of the Theory of Operator Algebras. Volume I: Elementary Theory}
         {Academic Press, Inc., New York-London}{1983}{#1}}
   \ITEE{#3}{RVKadison,JRRingrose1986}{
      \BIb{#2}{R.V. Kadison and J.R. Ringrose}
         {Fundamentals of the Theory of Operator Algebras. Volume II: Advanced Theory}
         {Academic Press, Inc., Orlando-London}{1986}{#1}}
   \ITEE{#3}{SKakutani1936}{
      \BIB{#2}{S. Kakutani}
         {\"{U}ber die Metrisation der topologischen Gruppen}
         {\jRN{ProcImpAcadTokyo}}{12}{1936}{82--84}{#1}}
   \ITEE{#3}{SKakutani1938}{
      \BIB{#2}{S. Kakutani}
         {Two fixed-point theorems concerning bicompact convex sets}
         {\jRN{ProcImpAcadTokyo}}{14}{1938}{242--245}{#1}}
   \ITEE{#3}{SKakutani1941}{
      \BIB{#2}{S. Kakutani}
         {Concrete representation of abstract L-spaces}
         {\jRN{AnnM}}{42}{1941}{523--537}{#1}}
   \ITEE{#3}{SKakutani1941a}{
      \BIB{#2}{S. Kakutani}
         {Concrete representation of abstract M-spaces}
         {\jRN{AnnM}}{42}{1941}{994--1024}{#1}}
   \ITEE{#3}{NKalton2007}{
      \BIB{#2}{N. Kalton}
         {Extending Lipschitz maps into $\CCc(K)$-spaces}
         {\jRN{IsraelJM}}{162}{2007}{275--315}{#1}}
   \ITEE{#3}{RKane2001}{
      \BIb{#2}{R. Kane}
         {Reflection Groups and Invariant Theory}
         {Canadian Mathematical Society, Springer}{2001}{#1}}
   \ITEE{#3}{VKannan,SRRaju1980}{
      \BIB{#2}{V. Kannan and S.R. Raju}
         {The nonexistence of invariant universal measures on semigroups}
         {\jRN{PAMS}}{78}{1980}{482--484}{#1}}
   \ITEE{#3}{IKaplansky1951}{
      \BIB{#2}{I. Kaplansky}
         {A theorem on rings of operators}
         {\jRN{PacJM}}{1}{1951}{227--232}{#1}}
   \ITEE{#3}{MKatetov1988}{
      \BiB{#2}{M. Kat\v{e}tov}{On universal metric spaces}{in: Frolik (ed.),}
         {General Topology and its Relations to Modern Analysis and Algebra VI. Proceedings of the Sixth Prague 
         Topological Symposium 1986}{Heldermann Verlag Berlin}{1988}{323--330}{#1}}
   \ITEE{#3}{YKatznelson1960}{
      \BIB{#2}{Y. Katznelson}
         {Sur les alg\'{e}bres dont les \'{e}l\'{e}ments non n\'{e}gatifs admettent des racines carr\'{e}es}
         {\jRN{AnnSciEcNormSupT}}{77}{1960}{167--174}{#1}}
   \ITEE{#3}{OHKeller1931}{
      \BIB{#2}{O.H. Keller}
         {Die Homoiomorphie der kompakten konvexen Mengen in Hilbertschen Raum}
         {\jRN{MAnn}}{105}{1931}{748--758}{#1}}
   \ITEE{#3}{MAKhamsi,WAKirk,CMartinez2000}{
      \BIB{#2}{M.A. Khamsi, W.A. Kirk, C. Martinez}
         {Fixed point and selection theorems in hyperconvex spaces}
         {\jRN{PAMS}}{128}{2000}{3275--3283}{#1}}
   \ITEE{#3}{ABKhararazishvili1998}{
      \BIb{#2}{A.B. Khararazishvili}
         {Transformation groups and invariant measures. Set-theoretic aspects}
         {World Scientific Publishing Co., Inc., River Edge, NJ}{1998}{#1}}
   \ITEE{#3}{YKijima1987}{
      \BIB{#2}{Y. Kijima}
         {Fixed points of nonexpansive self-maps of a compact metric space}
         {\jRN{JMAnApp}}{123}{1987}{114--116}{#1}}
  \ITEE{#3}{JSKim,ChRKim,SGLee1980}{
      \BIB{#2}{J.S. Kim, Ch.R. Kim, S.G. Lee}
         {Reducing operator valued spectra of a Hilbert space operator}
         {\jRN{JKoreanMS}}{17}{1980}{123--129}{#1}}
   \ITEE{#3}{JKindler1995}{
      \BIB{#2}{J. Kindler}
         {Minimax theorems with applications to convex metric spaces}
         {\jRN{CollM}}{68}{1995}{179--186}{#1}}
   \ITEE{#3}{WAKirk1998}{
      \BIB{#2}{W.A. Kirk}
         {Hyperconvexity of $\RRR$-trees}
         {\jRN{FM}}{156}{1998}{67--72}{#1}}
   \ITEE{#3}{VLKleeJr1952}{
      \BIB{#2}{V.L. Klee Jr.}
         {Invariant metrics in groups (solution of a problem of Banach)}
         {\jRN{PAMS}}{3}{1952}{484--487}{#1}}
   \ITEE{#3}{HJKowalsky1957}{
      \BIB{#2}{H.J. Kowalsky}
         {Einbettung metrischer R\"{a}ume}
         {\jRN{ArchM}}{8}{1957}{336--339}{#1}}
   \ITEE{#3}{WKubis,MRubin2010}{
      \BIB{#2}{W. Kubi\'{s} and M. Rubin}
         {Extension and reconstruction theorems for the Urysohn universal metric space}
         {\jRN{CzMJ}}{60}{2010}{1--29}{#1}}
   \ITEE{#3}{KKuratowski1966}{
      \BIb{#2}{K. Kuratowski}
         {Topology. \textup{Vol. I}}
         {\jRN{PWN}}{1966}{#1}}
   \ITEE{#3}{KKuratowski,BKnaster1927}{
      \BIB{#2}{K. Kuratowski and B. Knaster}
         {A connected and connected im kleinen point set which contains no perfect subset}
         {\jRN{BAMS}}{33}{1927}{106--109}{#1}}
   \ITEE{#3}{KKuratowski,AMostowski1976}{
      \BIb{#2}{K. Kuratowski and A. Mostowski}
         {Set Theory with an Introduction to Descriptive Set Theory}
         {\jRN{PWN}}{1976}{#1}}
   \ITEE{#3}{GLewicki1992}{
      \BIB{#2}{G. Lewicki}
         {Bernstein's ``lethargy'' theorem in metrizable topological linear spaces}
         {\jRN{MonatM}}{113}{1992}{213--226}{#1}}
   \ITEE{#3}{ASLewis1996}{
      \BIB{#2}{A.S. Lewis}
         {Group invariance and convex matrix analysis}
         {\jRN{SIAMJMAA}}{17}{1996}{927--949}{#1}}
   \ITEE{#3}{C-KLi,N-KTsing1991}{
      \BIB{#2}{C.-K. Li and N.-K. Tsing}
         {$G$-invariant norms and $G(c)$-radii}
         {\jRN{LAA}}{150}{1991}{179--194}{#1}}
   \ITEE{#3}{AJLazar,JLindenstrauss1971}{
      \BIB{#2}{A.J. Lazar and J. Lindenstrauss}
         {Banach spaces whose duals are $L_1$ spaces and their representing matrices}
         {\jRN{ActaM}}{126}{1971}{165--193}{#1}}
   \ITEE{#3}{EHLieb,MLoss1997}{
      \BIb{#2}{E.H. Lieb and M. Loss}
         {Analysis \textup{(Graduate Studies in Mathematics, vol. 14)}}
         {Amer. Math. Soc., Providence, RI}{1997}{#1}}
   \ITEE{#3}{ALindenbaum1926}{
      \BIB{#2}{A. Lindenbaum}
         {Contributions \`{a} l'\'{e}tude de l'espace m\'{e}trique I}
         {\jRN{FM}}{8}{1926}{209--222}{#1}}
   \ITEE{#3}{DLindenstrauss,LTzafriri1971}{
      \BIB{#2}{D. Lindenstrauss and L. Tzafriri}
         {On the complemented subspaces problem}
         {\jRN{IsraelJM}}{9}{1971}{263--269}{#1}}
   \ITEE{#3}{RILoebl1986}{
      \BIB{#2}{R.I. Loebl}
         {A note on containment of operators}
         {\jRN{BAustrMS}}{33}{1986}{279--291}{#1}}
   \ITEE{#3}{LHLoomis1945}{
      \BIB{#2}{L.H. Loomis}
         {Abstract congruence and the uniqueness of Haar measure}
         {\jRN{AnnM}}{46}{1945}{348--355}{#1}}
   \ITEE{#3}{LHLoomis1949}{
      \BIB{#2}{L.H. Loomis}
         {Haar measure in uniform structures}
         {\jRN{DukeMJ}}{16}{1949}{193--208}{#1}}
   \ITEE{#3}{ERLorch1939}{
      \BIB{#2}{E.R. Lorch}
         {Bicontinuous linear transformation in certain vector spaces}
         {\jRN{BAMS}}{45}{1939}{564--569}{#1}}
   \ITEE{#3}{ATLundell,SWeingram1969}{
      \BIb{#2}{A.T. Lundell and S. Weingram}
         {The topology of CW-complexes}
         {Litton Educ. Publ.}{1969}{#1}}
   \ITEE{#3}{WLusky1976}{
      \BIB{#2}{W. Lusky}
         {The Gurarij spaces are unique}
         {\jRN{ArchM}}{27}{1976}{627--635}{#1}}
   \ITEE{#3}{WLusky1977}{
      \BIB{#2}{W. Lusky}
         {On separable Lindenstrauss spaces}
         {\jRN{JFA}}{26}{1977}{103--120}{#1}}
   \ITEE{#3}{DMaharam1942}{
      \BIB{#2}{D. Maharam}
         {On homogeneous measure algebras}
         {\jRN{PNatlUSA}}{28}{1942}{108--111}{#1}}
   \ITEE{#3}{MMalicki,SSolecki2009}{
      \BIB{#2}{M. Malicki and S. Solecki}
         {Isometry groups of separable metric spaces}
         {\jRN{MProcCambPhS}}{146}{2009}{67--81}{#1}}
   \ITEE{#3}{PMankiewicz1972}{
      \BIB{#2}{P. Mankiewicz}
         {On extension of isometries in normed linear spaces}
         {\jRN{BAPolSSSM}}{20}{1972}{367--371}{#1}}
   \ITEE{#3}{JMartinezMaurica,MTPellon1987}{
      \BIB{#2}{J. Martinez-Maurica and M.T. Pell\'{o}n}
         {Non-archimedean Chebyshev centers}
         {\jRN{IndagMP}}{90}{1987}{417--421}{#1}}
   \ITEE{#3}{KMaurin1980}{
      \BIb{#2}{K. Maurin}
         {Analysis, Part II}
         {D. Reidel, Dordrecht-Boston-London}{1980}{#1}}
   \ITEE{#3}{SMazur,SUlam1932}{
      \BIB{#2}{S. Mazur and S. Ulam}
         {Sur les transformationes isom\'{e}triques d'espaces vectoriels norm\'{e}s}
         {\jRN{CRASParis}}{194}{1932}{946--948}{#1}}
   \ITEE{#3}{SMazurkiewicz1920}{
      \BIB{#2}{S. Mazurkiewicz}
         {Sur les lignes de Jordan}
         {\jRN{FM}}{1}{1920}{166--209}{#1}}
   \ITEE{#3}{SMazurkiewicz,WSierpinski1920}{
      \BIB{#2}{S. Mazurkiewicz and W. Sierpi\'{n}ski}
         {Contributions a la topologie des ensembles denombrables}
         {\jRN{FM}}{1}{1920}{17--27}{#1}}
   \ITEE{#3}{MMbekhta1992}{
      \BIB{#2}{M. Mbekhta}
         {Sur la structure des composantes connexes semi-Fredholm de $B(H)$}
         {\jRN{PAMS}}{116}{1992}{521--524}{#1}}
   \ITEE{#3}{JEMcCarthy1996}{
      \BIB{#2}{J.E. McCarthy}
         {Boundary values and Cowen-Douglas curvature}
         {\jRN{JFA}}{137}{1996}{1--18}{#1}}
   \ITEE{#3}{JMelleray2007}{
      \BIB{#2}{J. Melleray}
         {Computing the complexity of the relation of isometry between separable Banach spaces}
         {\jRN{MLQ}}{53}{2007}{128--131}{#1}}
   \ITEE{#3}{JMelleray2007a}{
      \BIB{#2}{J. Melleray}
         {On the geometry of Urysohn's universal metric space}
         {\jRN{TopA}}{154}{2007}{384--403}{#1}}
   \ITEE{#3}{JMelleray2008}{
      \BIB{#2}{J. Melleray}
         {Some geometric and dynamical properties of the Urysohn space}
         {\jRN{TopA}}{155}{2008}{1531--1560}{#1}}
   \ITEE{#3}{JMelleray,FVPetrov,AMVershik2008}{
      \BIB{#2}{J. Melleray, F.V. Petrov, A.M. Vershik}
         {Linearly rigid metric spaces and the embedding problem}
         {\jRN{FM}}{199}{2008}{177--194}{#1}}
   \ITEE{#3}{EMichael1953}{
      \BIB{#2}{E. Michael}
         {Some extension theorems for continuous functions}
         {\jRN{PacJM}}{3}{1953}{789--806}{#1}}
   \ITEE{#3}{EMichael1954}{
      \BIB{#2}{E. Michael}
         {Local properties of topological spaces}
         {\jRN{DukeMJ}}{21}{1954}{163--171}{#1}}
   \ITEE{#3}{EMichael1956}{
      \BIB{#2}{E. Michael}
         {Selected selection theorems}
         {\jRN{AmMMon}}{58}{1956}{233--238}{#1}}
   \ITEE{#3}{EMichael1956a}{
      \BIB{#2}{E. Michael}
         {Continuous selections. I}
         {\jRN{AnnM}}{63}{1956}{361--382}{#1}}
   \ITEE{#3}{EMichael1956b}{
      \BIB{#2}{E. Michael}
         {Continuous selections. II}
         {\jRN{AnnM}}{64}{1956}{562--580}{#1}}
   \ITEE{#3}{EMichael1959}{
      \BIB{#2}{E. Michael}
         {A theorem on semi-continuous set-valued functions}
         {\jRN{DukeMJ}}{26}{1959}{647--652}{#1}}
   \ITEE{#3}{JVanMill1986}{
      \BIB{#2}{J. van Mill}
         {Another counterexample in ANR theory}
         {\jRN{PAMS}}{97}{1986}{136--138}{#1}}
   \ITEE{#3}{JVanMill2001}{
      \BIb{#2}{J. van Mill}
         {The Infinite-Dimensional Topology of Function Spaces 
         \textup{(North-Holland Mathematical Library, vol. 64)}}
         {Elsevier, Amsterdam}{2001}{#1}}
   \ITEE{#3}{WMlak1991}{
      \BIb{#2}{W. Mlak}
         {Hilbert Spaces and Operator Theory}
         {PWN --- Polish Scientific Publishers and Kluwer Academic Publishers, Warszawa-Dordrecht}{1991}{#1}}
   \ITEE{#3}{JMogilski1979}{
      \BIB{#2}{J. Mogilski}
         {$CE$-decomposition of $l_2$-manifolds}
         {\jRN{BAPolSSSM}}{27}{1979}{309--314}{#1}}
   \ITEE{#3}{RLMoore1916}{
      \BIB{#2}{R.L. Moore}
         {On the foundations of plane analysis situs}
         {\jRN{TAMS}}{17}{1916}{131--164}{#1}}
   \ITEE{#3}{KMorita1955}{
      \BIB{#2}{K. Morita}
         {A condition for the metrizability of topological spaces and for $n$-dimensionality}
         {\jRN{SciRepTokyoA}}{5}{1955}{33--36}{#1}}
   \ITEE{#3}{AMukherjea,NATserpes1976}{
      \BIb{#2}{A. Mukherjea and N.A. Tserpes}
         {Measures on topological semigroups}
         {Springer Lecture Notes in Math. Vol. 547, Berlin}{1976}{#1}}
   \ITEE{#3}{JMycielski1974}{
      \BIB{#2}{J. Mycielski}
         {Remarks on invariant measures in metric spaces}
         {\jRN{CollM}}{32}{1974}{105--112}{#1}}
   \ITEE{#3}{SNNaboko1984}{
      \BIB{#2}{S.N. Naboko}
         {Conditions for similarity to unitary and selfadjoint operators}
         {\jRN{FunkAnalPril}}{18}{1984}{16--27}{#1}}
   \ITEE{#3}{LNachbin1965}{
      \BIb{#2}{L. Nachbin}
         {The Haar Integral}
         {D. Van Nostrand Company, Inc., Princeton-New Jersey-Toronto-New York-London}{1965}{#1}}
   \ITEE{#3}{TDNarang,SKGarg1991}{
      \BIB{#2}{T.D. Narang and S.K. Garg}
         {On the uniqueness of best approximation in non-archimedian spaces}
         {\jRN{PeriodMHung}}{22}{1991}{121--124}{#1}}
   \ITEE{#3}{JVonNeumann1930}{
      \BIB{#2}{J. von Neumann}
         {Zur Algebra der Funktionaloperationen und Theorie der normalen Operatoren}
         {\jRN{MAnn}}{102}{1930}{370--427}{#1}}
   \ITEE{#3}{JVonNeumann1934}{
      \BIB{#2}{J. von Neumann}
         {Zum Haarschen Mass in topologischen Gruppen}
         {\jRN{ComposM}}{1}{1934}{106--114}{#1}}
   \ITEE{#3}{JVonNeumann1937}{
      \BiB{#2}{J. von Neumann}
         {Some matrix-inequalities and metrization of matrix-space}{\jRN{TomskUnivRev}{} \textbf{1} (1937), 286--300; 
         in }{Collected Works}{Pergamon, New York}{1962}{Vol. 4, 205--219}{#1}}
   \ITEE{#3}{JVonNeumann1949}{
      \BIB{#2}{J. von Neumann}
         {On Rings of Operators. Reduction Theory}
         {\jRN{AnnM}}{50}{1949}{401--485}{#1}}
   \ITEE{#3}{ONielson1973}{
      \BIB{#2}{O. Nielson}
         {Borel sets of von Neumann algebras}
         {\jRN{AmJM}}{95}{1973}{145--164}{#1}}
   \ITEE{#3}{pn2002}{\bibITEM{#2}{#1} \mypaplist{pn1}}
   \ITEE{#3}{pn2006a}{\bibITEM{#2}{#1} \mypaplist{pn2}}
   \ITEE{#3}{pn2006b}{\bibITEM{#2}{#1} \mypaplist{pn3}}
   \ITEE{#3}{pn2007}{\bibITEM{#2}{#1} \mypaplist{pn4}}
   \ITEE{#3}{pn2008a}{\bibITEM{#2}{#1} \mypaplist{pn5}}
   \ITEE{#3}{pn2008b}{\bibITEM{#2}{#1} \mypaplist{pn6}}
   \ITEE{#3}{pn2009a}{\bibITEM{#2}{#1} \mypaplist{pn7}}
   \ITEE{#3}{pn2009b}{\bibITEM{#2}{#1} \mypaplist{pn8}}
   \ITEE{#3}{pn2009c}{\bibITEM{#2}{#1} \mypaplist{pn9}}
   \ITEE{#3}{pn2010a}{\bibITEM{#2}{#1} \mypaplist{pn12}}
   \ITEE{#3}{pn2010b}{\bibITEM{#2}{#1} \mypaplist{pn13}}
   \ITEE{#3}{pn2011a}{\bibITEM{#2}{#1} \mypaplist{pn10}}
   \ITEE{#3}{pn2011b}{\bibITEM{#2}{#1} \mypaplist{pn15}}
   \ITEE{#3}{pn2011c}{\bibITEM{#2}{#1} \mypaplist{pn16}}
   \ITEE{#3}{pn2011d}{\bibITEM{#2}{#1} \mypaplist{pn17}}
   \ITEE{#3}{pn2009x}{
      \bibITEM{#2}{#1} \mypaplist{pn11}}
   \ITEE{#3}{pn2010x}{
      \bibITEM{#2}{#1} \mypaplist{pn14}}
   \ITEE{#3}{pnXXXXb}{
      \bibITEM{#2}{#1} \mypaplist{pnX2}}
   \ITEE{#3}{pnXXXXc}{
      \bibITEM{#2}{#1} \mypaplist{pnX3}}
   \ITEE{#3}{pnXXXXd}{
      \bibITEM{#2}{#1} \mypaplist{pnX13}}
   \ITEE{#3}{MNiezgoda1998}{
      \BIB{#2}{M. Niezgoda}
         {Group majorization and Schur type inequalities}
         {\jRN{LAA}}{268}{1998}{9--30}{#1}}
   \ITEE{#3}{MNiezgoda1998a}{
      \BIB{#2}{M. Niezgoda}
         {An analytical characterization of effective and of irreducible groups inducing cone orderings}
         {\jRN{LAA}}{269}{1998}{105--114}{#1}}
   \ITEE{#3}{MNiezgoda,TYTam2001}{
      \BIB{#2}{M. Niezgoda and T.Y. Tam}
         {On norm property of $G(c)$-radii and Eaton triples}
         {\jRN{LAA}}{336}{2001}{119--130}{#1}}
   \ITEE{#3}{APazy1983}{
      \BIb{#2}{A. Pazy}{Semigroups of Linear Operators 
         and Applications to Partial Differential Equations \textup{(Applied Mathematical Sciences, vol. 44)}}
         {Springer-Verlag, New York}{1983}{#1}}
   \ITEE{#3}{APelc1982}{
      \BIB{#2}{A. Pelc}
         {Semiregular invariant measures on abelian groups}
         {\jRN{PAMS}}{86}{1982}{423--426}{#1}}
   \ITEE{#3}{RPenrose1955}{
      \BIB{#2}{R. Penrose}
         {A generalized inverse for matrices}
         {\jRN{ProcCambPhS}}{51}{1955}{406--413}{#1}}
   \ITEE{#3}{VPestov2006}{
      \BIb{#2}{V. Pestov}
         {Dynamics of infinite-dimensional groups. The Ramsey-Dvoretzky-Milman phenomenon}
         {University Lecture Series \textbf{40}, AMS, Providence, RI}{2006}{#1}}
   \ITEE{#3}{VPestov2007}{
      \BiB{#2}{V. Pestov}
         {Forty-plus annotated questions about large topological groups}
         {in:}{Open Problems in Topology II}{Elliot Pearl (editor), Elsevier B.V., Amsterdam}{2007}{439--450}{#1}}
   \ITEE{#3}{PVPetersen1993}{
      \BiB{#2}{P.V. Petersen}
         {Gromov-Hausdorff convergence of metric spaces}{in book:}{Differential Geometry: Riemannian Geometry 
         (Los Angeles, CA, 1990)}{Amer. Math. Soc., Providence, RI}{1993}{489--504}{#1}}
   \ITEE{#3}{DRamachandran,MMisiurewicz1982}{
      \BIB{#2}{D. Ramachandran and M. Misiurewicz}
         {Hopf's theorem on invariant measures for a group of transformations}
         {\jRN{SM}}{74}{1982}{183--189}{#1}}
   \ITEE{#3}{JMRosenblatt1974}{
      \BIB{#2}{J.M. Rosenblatt}
         {Equivalent invariant measures}
         {\jRN{IsraelJM}}{17}{1974}{261--270}{#1}}
   \ITEE{#3}{HLRoyden1963}{
      \BIb{#2}{H.L. Royden}
         {Real Analysis}
         {The Macmillan Co., New York}{1963}{#1}}
   \ITEE{#3}{WRudin1962}{
      \BIb{#2}{W. Rudin}
         {Fourier Analysis on Groups \textup{(Interscience Tracts in Pure and Applied Mathematics, Number 12)}}
         {Interscience Publishers, New York}{1962}{#1}}
   \ITEE{#3}{WRudin1991}{
      \BIb{#2}{W. Rudin}
         {Functional Analysis}
         {McGraw-Hill Science}{1991}{#1}}
   \ITEE{#3}{TSaito1972}{
      \BiB{#2}{T. Sait\^{o}}{Generations of von Neumann algebras}
         {Lecture Notes in Math. vol. 247}{\textup{(}Lecture on Operator Algebras\textup{)}}
         {Springer, Berlin-Heidelberg-New York}{1972}{435--531}{#1}}
   \ITEE{#3}{KSakai,MYaguchi2003}{
      \BIB{#2}{K. Sakai and M. Yaguchi}
         {Characterizing manifolds modeled on certain dense subspaces of non-separable Hilbert spaces}
         {\jRN{TsukubaJM}}{27}{2003}{143--159}{#1}}
   \ITEE{#3}{SSakai1971}{
      \BIb{#2}{S. Sakai}
         {$\CCc^*$-Algebras and $\WWw^*$-Algebras}
         {Springer-Verlag, Berlin-Heidelberg-New York}{1971}{#1}}
   \ITEE{#3}{RSchori1971}{
      \BIB{#2}{R. Schori}
         {Topological stability for infinite-dimensional manifolds}
         {\jRN{ComposM}}{23}{1971}{87--100}{#1}}
   \ITEE{#3}{JTSchwartz1967}{
      \BIb{#2}{J.T. Schwartz}
         {$\WWw^*$-algebras}
         {Gordon and Breach, Science Publishers Inc., New York-London-Paris}{1967}{#1}}
   \ITEE{#3}{ZSemadeni1971}{
      \BIb{#2}{Z. Semadeni}
         {Banach Spaces of Continuous Functions (Vol. I)}
         {\jRN{PWN}}{1971}{#1}}
   \ITEE{#3}{JPSerre1951}{
      \BIB{#2}{J.-P. Serre}
         {Homologie singuli\`{e}re des espaces fibr\'{e}s}
         {\jRN{AnnM}}{54}{1951}{425--505}{#1}}
   \ITEE{#3}{DSherman2007}{
      \BIB{#2}{D. Sherman}
         {On the dimension theory of von Neumann algebras}
         {\jRN{MScand}}{101}{2007}{123--147}{#1}}
   \ITEE{#3}{WSierpinski1928}{
      \BIB{#2}{W. Sierpi\'{n}ski}
         {Sur les projections des ensembles compl\'{e}mentaires aux ensembles \textup{(A)}}
         {\jRN{FM}}{11}{1928}{117--122}{#1}}
   \ITEE{#3}{MSlocinski1980}{
      \BIB{#2}{M. S\l{}oci\'{n}ski}
         {On the Wold-type decomposition of a pair of commuting isometries}
         {\jRN{APM}}{37}{1980}{255--262}{#1}}
   \ITEE{#3}{RCSteinlage1975}{
      \BIB{#2}{R.C. Steinlage}
         {On Haar measure in locally compact $T_2$ spaces}
         {\jRN{AmJM}}{97}{1975}{291--307}{#1}}
   \ITEE{#3}{JStochel,FHSzafraniec1989}{
      \BIB{#2}{J. Stochel and F.H. Szafraniec}
         {On normal extensions of unbounded operators. III. Spectral properties}
         {\jRN{PublRIMSKyoto}}{25}{1989}{105--139}{#1}}
   \ITEE{#3}{JStochel,FHSzafraniec1989a}{
      \BIB{#2}{J. Stochel and F.H. Szafraniec}
         {The normal part of an unbounded operator}
         {\jRN{ProcKonink}}{92}{1989}{495--503}{#1}}
   \ITEE{#3}{AHStone1962}{
      \BIB{#2}{A.H. Stone}
         {Absolute $\FFf_{\sigma}$-spaces}
         {\jRN{PAMS}}{13}{1962}{495--499}{#1}}
   \ITEE{#3}{AHStone1962a}{
      \BIB{#2}{A.H. Stone}
         {Non-separable Borel sets}
         {\jRN{DissM}}{28}{1962}{41 pages}{#1}}
   \ITEE{#3}{AHStone1972}{
      \BIB{#2}{A.H. Stone}
         {Non-separable Borel sets II}
         {\jRN{GTopA}}{2}{1972}{249--270}{#1}}
   \ITEE{#3}{MHStone1937}{
      \BIB{#2}{M.H. Stone}
         {Application of the theory of Boolean rings to general topology}
         {\jRN{TAMS}}{41}{1937}{375--481}{#1}}
   \ITEE{#3}{MHStone1948}{
      \BIB{#2}{M.H. Stone}
         {The generalized Weierstrass approximation theorem}
         {\jRN{MMag}}{21}{1948}{167--184}{#1}}
   \ITEE{#3}{BSz-Nagy1947}{
      \BIB{#2}{B. Sz.-Nagy}
         {On uniformly bounded linear transformations in Hilbert space}
         {\jRN{ActaSM}}{11}{1947}{152--157}{#1}}
   \ITEE{#3}{WTakahashi1970}{
      \BIB{#2}{W. Takahashi}
         {A convexity in metric space and nonexpansive mappings, I}
         {\jRN{KodaiMSemRep}}{22}{1970}{142--149}{#1}}
   \ITEE{#3}{MTakesaki2002}{
      \BIb{#2}{M. Takesaki}
         {Theory of Operator Algebras I \textup{(Encyclopaedia of Mathematical Sciences, Volume 124)}}
         {Springer-Verlag, Berlin-Heidelberg-New York}{2002}{#1}}
   \ITEE{#3}{MTakesaki2003}{
      \BIb{#2}{M. Takesaki}
         {Theory of Operator Algebras II \textup{(Encyclopaedia of Mathematical Sciences, Volume 125)}}
         {Springer-Verlag, Berlin-Heidelberg-New York}{2003}{#1}}
   \ITEE{#3}{MTakesaki2003a}{
      \BIb{#2}{M. Takesaki}
         {Theory of Operator Algebras III \textup{(Encyclopaedia of Mathematical Sciences, Volume 127)}}
         {Springer-Verlag, Berlin-Heidelberg-New York}{2003}{#1}}
   \ITEE{#3}{TYTam1999}{
      \BIB{#2}{T.Y. Tam}
         {An extension of a result of Lewis}
         {\jRN{ELA}}{5}{1999}{1--10}{#1}}
   \ITEE{#3}{TYTam2000}{
      \BIB{#2}{T.Y. Tam}
         {Group majorization, Eaton triples and numerical range}
         {\jRN{LMLA}}{47}{2000}{11--28}{#1}}
   \ITEE{#3}{TYTam2002}{
      \BIB{#2}{T.Y. Tam}
         {Generalized Schur-concave functions and Eaton triples}
         {\jRN{LMLA}}{50}{2002}{113--120}{#1}}
   \ITEE{#3}{TYTam,WCHill2001}{
      \BIB{#2}{T.Y. Tam and W.C. Hill}
         {On $G$-invariant norms}
         {\jRN{LAA}}{331}{2001}{101--112}{#1}}
   \ITEE{#3}{AFTiman,IAVestfrid1983}{
      \BIB{#2}{A.F. Timan and I.A. Vestfrid}
         {Any separable ultrametric space can be isometrically imbedded in $l_2$}
         {\jRN{FAA}}{17}{1983}{70--71}{#1}}
   \ITEE{#3}{JTomiyama1958}{
      \BIB{#2}{J. Tomiyama}
         {Generalized dimension function for $\WWw^*$-algebras of infinite type}
         {\jRN{TohokuMJ} (2)}{10}{1958}{121--129}{#1}}
   \ITEE{#3}{HTorunczyk1970}{
      \BIB{#2}{H. Toru\'{n}czyk}
         {Remarks on Anderson's paper ``On topological infinite deficiency''}
         {\jRN{FM}}{66}{1970}{393--401}{#1}}
   \ITEE{#3}{HTorunczyk1970a}{
      \BIb{#2}{H. Toru\'{n}czyk}
         {$G$-$K$-absorbing and skeletonized sets in metric spaces}
         {Ph.D. thesis, Inst. Math. Polish Acad. Sci., Warszawa}{1970}{#1}}
   \ITEE{#3}{HTorunczyk1972}{
      \BIB{#2}{H. Toru\'{n}czyk}
         {A short proof of Hausdorff's theorem on extending metrics}
         {\jRN{FM}}{77}{1972}{191--193}{#1}}
   \ITEE{#3}{HTorunczyk1974}{
      \BIB{#2}{H. Toru\'{n}czyk}
         {Absolute retracts as factors of normed linear spaces}
         {\jRN{FM}}{86}{1974}{53--67}{#1}}
   \ITEE{#3}{HTorunczyk1975}{
      \BIB{#2}{H. Toru\'{n}czyk}
         {On Cartesian factors and the topological classification of linear metric spaces}
         {\jRN{FM}}{88}{1975}{71--86}{#1}}
   \ITEE{#3}{HTorunczyk1978}{
      \BIB{#2}{H. Toru\'{n}czyk}
         {Concerning locally homotopy negligible sets and characterization of $l_2$-manifolds}
         {\jRN{FM}}{101}{1978}{93--110}{#1}}
   \ITEE{#3}{HTorunczyk1980}{
      \BiB{#2}{H. Toru\'{n}czyk}{Characterization of infinite-dimensional manifolds}{in:}
         {Proceedings of the International Conference on Geometric Topology (Warsaw, 1978)}
         {\jRN{PWN}}{1980}{431--437}{#1}}
   \ITEE{#3}{HTorunczyk1981}{
      \BIB{#2}{H. Toru\'{n}czyk}
         {Characterizing Hilbert space topology}
         {\jRN{FM}}{111}{1981}{247--262}{#1}}
   \ITEE{#3}{HTorunczyk1985}{
      \BIB{#2}{H. Toru\'{n}czyk}
         {A correction of two papers concerning Hilbert manifolds}
         {\jRN{FM}}{125}{1985}{89--93}{#1}}
   \ITEE{#3}{KTsuda1985}{
      \BIB{#2}{K. Tsuda}
         {A note on closed embeddings of finite dimensional metric spaces}
         {\jRN{BLondMS}}{17}{1985}{273--278}{#1}}
   \ITEE{#3}{PSUrysohn1925}{
      \BIB{#2}{P.S. Urysohn}
         {Sur un espace m\'{e}trique universel}
         {\jRN{CRASParis}}{180}{1925}{803--806}{#1}}
   \ITEE{#3}{PSUrysohn1927}{
      \BIB{#2}{P.S. Urysohn}
         {Sur un espace m\'{e}trique universel}
         {\jRN{BullSM}}{51}{1927}{43--64, 74--96}{#1}}
   \ITEE{#3}{VVUspenskij1986}{
      \BIB{#2}{V.V. Uspenskij}
         {A universal topological group with a countable basis}
         {\jRN{FAA}}{20}{1986}{86--87}{#1}}
   \ITEE{#3}{VVUspenskij1990}{
      \BIB{#2}{V.V. Uspenskij}
         {On the group of isometries of the Urysohn universal metric space}
         {\jRN{CMUC}}{31}{1990}{181--182}{#1}}
   \ITEE{#3}{VVUspenskij2004}{
      \BIB{#2}{V.V. Uspenskij}
         {The Urysohn universal metric space is homeomorphic to a Hilbert space}
         {\jRN{TopA}}{139}{2004}{145--149}{#1}}
   \ITEE{#3}{VVUspenskij2008}{
      \BIB{#2}{V.V. Uspenskij}
         {On subgroups of minimal topological groups}
         {\jRN{TopA}}{155}{2008}{1580--1606}{#1}}
   \ITEE{#3}{VSVaradarajan1963}{
      \BIB{#2}{V.S. Varadarajan}
         {Groups of automorphisms of Borel spaces}
         {\jRN{TAMS}}{109}{1963}{191--220}{#1}}
   \ITEE{#3}{AMVershik1998}{
      \BIB{#2}{A.M. Vershik}
         {The universal Urysohn space, Gromov's metric triples, and random metrics on the series of natural numbers}
         {\jRN{UspekhiMN}}{53}{1998}{57--64}{#1} English translation: \jRN{RussMS}{} \textbf{53} (1998), 921--928. 
         Correction: \jRN{UspekhiMN}{} \textbf{56} (2001), p. 207. English translation: \jRN{RussMS}{} \textbf{56} 
         (2001), p. 1015.}
   \ITEE{#3}{AMVershik2002}{
      \BIb{#2}{A.M. Vershik}
         {Random metric spaces and the universal Urysohn space}
         {Fundamental Mathematics Today. 10th anniversary of the Independent Moscow University. MCCME Publ.}{2002}{#1}}
   \ITEE{#3}{NWeaver1999}{
      \BIb{#2}{N. Weaver}
         {Lipschitz Algebras}
         {World Scientific}{1999}{#1}}
   \ITEE{#3}{JWeidmann1980}{
      \BIb{#2}{J. Weidmann}
         {Linear Operators in Hilbert Spaces}
         {(Graduate Texts in Mathematics, vol. 68) Springer-Verlag New York Inc.}{1980}{#1}}
   \ITEE{#3}{JEWest1969}{
      \BIB{#2}{J.E. West}
         {Approximating homotopies by isotopies in Fr\'{e}chet manifolds}
         {\jRN{BAMS}}{75}{1969}{1254--1257}{#1}}
   \ITEE{#3}{JEWest1969a}{
      \BIB{#2}{J.E. West}
         {Fixed-point sets of transformation groups on infinite-product spaces}
         {\jRN{PAMS}}{21}{1969}{575--582}{#1}}
   \ITEE{#3}{JEWest1970}{
      \BIB{#2}{J.E. West}
         {The ambient homeomorphy of infinite-dimensional Hilbert spaces}
         {\jRN{PacJM}}{34}{1970}{257--267}{#1}}
   \ITEE{#3}{JHCWhitehead1949}{
      \BIB{#2}{J.H.C. Whitehead}
         {Combinatorial homotopy I}
         {\jRN{BAMS}}{55}{1949}{213--245}{#1}}
   \ITEE{#3}{GTWhyburn1942}{
      \BIb{#2}{G. T. Whyburn}
         {Analytic Topology}
         {Amer. Math. Soc. Colloquium Publications (vol. XXVIII), New York}{1942}{#1}}
   \ITEE{#3}{WWogen1969}{
      \BIB{#2}{W. Wogen}
         {On generators for von Neumann algebras}
         {\jRN{BAMS}}{75}{1969}{95--99}{#1}}
   \ITEE{#3}{RYTWong1967}{
      \BIB{#2}{R.Y.T. Wong}
         {On homeomorphisms of certain infinite dimensional spaces}
         {\jRN{TAMS}}{128}{1967}{148--154}{#1}}
   \ITEE{#3}{LYang,JZhang1987}{
      \BIB{#2}{L. Yang and J. Zhang}
         {Average distance constants of some compact convex space}
         {\jRN{JChinUST}}{17}{1987}{17--23}{#1}}
   \ITEE{#3}{PZakrzewski1993}{
      \BIB{#2}{P. Zakrzewski}
         {The existence of invariant $\sigma$-finite measures for a group of transformations}
         {\jRN{IsraelJM}}{83}{1993}{275--287}{#1}}
   \ITEE{#3}{PZakrzewski2002}{
      \BIb{#2}{P. Zakrzewski}
         {Measures on Algebraic-Topological Structures, Handbook of Measure Thoery}
         {E. Pap, ed., Elsevier, Amsterdam}{2002, 1091--1130}{#1}}
   \ITEE{#3}{KZhu2000}{
      \BIB{#2}{K. Zhu}
         {Operators in Cowen-Douglas classes}
         {\jRN{IllinoisJM}}{44}{2000}{767--783}{#1}}
   }
\newcommand{\mypaplist}[2][]{
   \ITEE{#2}{pn1}{
      \myBIB{Separate and joint similarity to families of normal operators}
         {\jRN[#1]{SM}}{149}{2002}{39--62}}
   \ITEE{#2}{pn2}{
      \myBIB{Locally arcwise connected metrizable spaces with the fixed point property are complete-metrizable}
         {\jRN[#1]{TopA}}{153}{2006}{1639--1642}}
   \ITEE{#2}{pn3}{
      \myBIB{Invariant measures for equicontinuous semigroups of continuous transformations of a compact Hausdorff space}
         {\jRN[#1]{TopA}}{153}{2006}{3373--3382}}
   \ITEE{#2}{pn4}{
      \myBIB{Approximation of the Hausdorff distance by the distance of continuous surjections}
         {\jRN[#1]{TopA}}{154}{2007}{655--664}}
   \ITEE{#2}{pn5}{
      \myBIB{Generalized Haar integral}
         {\jRN[#1]{TopA}}{155}{2008}{1323--1328}}
   \ITEE{#2}{pn6}{
      \myBIB{Integration and Lipschitz functions}
         {\jRN[#1]{RCMP}}{57}{2008}{391--399}}
   \ITEE{#2}{pn7}{
      \myBIB{Canonical Banach function spaces generated by Urysohn universal spaces. Measures as Lipschitz maps}
         {\jRN[#1]{SM}}{192}{2009}{97--110}}
   \ITEE{#2}{pn8}{
      \myBIB{Urysohn universal spaces as metric groups of exponent $2$}
         {\jRN[#1]{FM}}{204}{2009}{1--6}}
   \ITEE{#2}{pn9}{
      \myBIB{Central subsets of Urysohn universal spaces}
         {\jRN[#1]{CMUC}}{50}{2009}{445--461}}
   \ITEE{#2}{pn10}{
      \myBIB[P. Niemiec and T.Y. Tam]{A representation of $G$-in\-variant norms for Eaton triple}
         {\jRN[#1]{JCA}}{18}{2011}{59--65}}
   \ITEE{#2}{pn11}{
      \myBIB{Functor of extension of contractions on Urysohn universal spaces}
         {\jRN[#1]{ACS}}{}{2009}{\texttt{DOI: 10.1007/s10485-009-9218-z}}}
   \ITEE{#2}{pn12}{
      \myBIB{Ultra-$\mM$-separability}
         {\jRN[#1]{TopA}}{157}{2010}{669--673}}
   \ITEE{#2}{pn13}{
      \myBIB{Functor of extension of $\Lambda$-isometric maps between central subsets 
         of the unbounded Urysohn universal space}{\jRN[#1]{CMUC}}{51}{2010}{541--549}}
   \ITEE{#2}{pn14}{
      \myBIB{Normed topological pseudovector groups}{\jRN[#1]{ACS}}{}{2010}
         {\ITE{\equal{#1}{}}{\texttt{DOI: 10.1007/s10485\-010-9239-7}}{\texttt{DOI: 10.1007/s10485-010-9239-7}}}}
   \ITEE{#2}{pn15}{
      \myBIB{Topological structure of Urysohn universal spaces}
         {\jRN[#1]{TopA}}{158}{2011}{352--359}}
   \ITEE{#2}{pn16}{
      \myBIB{A note on invariant measures}
         {\jRN[#1]{OpusM}}{31}{2011}{425--431}}
   \ITEE{#2}{pn17}{
      \myBIB{Strengthened Stone-Weierstrass type theorem}
         {\jRN[#1]{OpusM}}{31}{2011}{645--650}}
   \ITEE{#2}{pnX2}{
      \myBAPP{Functor of continuation in Hilbert cube and Hilbert space}
         {to appear in \jRN[#1]{FM}}}
   \ITEE{#2}{pnX3}{
      \myBAPP{Norm closures of orbits of bounded operators}
         {to appear.}}
   \ITEE{#2}{pnX6}{
      \myBAPP{Extending maps by injective $\sigma$-$Z$-maps in Hilbert manifolds}
         {to appear in \jRN[#1]{BullPol}}}
   \ITEE{#2}{pnX7}{
      \myBAPP{Spaces of measurable functions}
         {submitted to \jRN[#1]{CollectM}}}
   \ITEE{#2}{pnX8}{
      \myBAPP{Normal systems over ANR's, rigid embeddings and nonseparable absorbing sets}
         {submitted to \jRN[#1]{ActaMSinES}}}
   \ITEE{#2}{pnX9}{
      \myBAPP{Borel structure of the spectrum of a closed operator}
         {submitted to \jRN[#1]{SM}}}
   \ITEE{#2}{pnX10}{
      \myBAPP{Central points and measures and dense subsets of compact metric spaces}
         {submitted to \jRN[#1]{TopMethNA}}}
   \ITEE{#2}{pnX11}{
      \myBAPP{Generalized absolute values and polar decompositions of a bounded operator}
         {submitted to \jRN[#1]{IEOT}.}}
   \ITEE{#2}{pnX12}{
      \myBAPP{Ultrametrics, extending of Lipschitz maps and nonexpansive selections}
         {accepted for publication in \jRN[#1]{HJM}}}
   \ITEE{#2}{pnX13}{
      \myBAPP{A note on ANR's}
         {submitted to \jRN[#1]{TopA}}}
   \ITEE{#2}{pnX14}{
      \myBAPP{Problem with almost everywhere equality}
         {submitted to \jRN[#1]{ArchM}}}
   \ITEE{#2}{pnX15}{
      \myBAPP{Universal valued Abelian groups}
         {submitted to \jRN[#1]{LNM}}}
   \ITEE{#2}{pnX16}{
      \myBAPP{Unitary equivalence and decompositions of finite systems of closed densely defined operators 
         in Hilbert spaces}{submitted to \jRN[#1]{DissM}}}
   }
\begin{document}

\title[Borel structure of the spectrum]{Borel structure of the spectrum\\of a closed operator}
\myData
\begin{abstract}
For a linear operator $T$ in a Banach space let $\sigma_p(T)$ denote the point spectrum of $T$, $\sigma_{p[n]}(T)$
for finite $n > 0$ be the set of all $\lambda \in \sigma_p(T)$ such that $\dim \ker (T - \lambda) = n$ and let
$\sigma_{p[\infty]}(T)$ be the set of all $\lambda \in \sigma_p(T)$ for which $\ker (T - \lambda)$ is infinite-dimensional.
It is shown that $\sigma_p(T)$ is $\FFf_{\sigma}$, $\sigma_{p[\infty]}(T)$ is $\FFf_{\sigma\delta}$ and for each finite
$n$ the set $\sigma_{p[n]}(T)$ is the intersection of an $\FFf_{\sigma}$ and a $\GGg_{\delta}$ set provided $T$ is closable
and the domain of $T$ is separable and weakly $\sigma$-compact. For closed densely defined operators in a separable Hilbert
space $\HHh$ more detailed decomposition of the spectra is done and the algebra of all bounded linear operators on $\HHh$
is decomposed into Borel parts. In particular, it is shown that the set of all closed range operators on $\HHh$ is Borel.\\
\textit{2010 MSC: Primary 47A10, Secondary 54H05, 28A05.}\\
Key words: spectrum, point spectrum, Hilbert space, reflexive Banach space, Borel set, closed operator, weak topology.
\end{abstract}
\maketitle


\SECT{Introduction}

Let $\HHh$ be a separable Hilbert space. As it is easily seen, any subset of the complex plane is the point spectrum
of an unbounded linear operator acting on $\HHh$. To see this, note that the dimension of $\HHh$ as a vector space is equal
to the power of the continuum; take a Hamel basis $\{e_s\}_{s \in \RRR}$ of $\HHh$ and any function $\psi\dd \RRR
\to \CCC$, define $A\dd \HHh \to \HHh$ by the rule $A e_s = \psi(s) e_s$, and observe that the point spectrum of $A$
coincides with the image of the function $f$ (see also \cite[Example~2]{j-s}). So, non-Borel subsets of $\CCC$ may be
the point spectra of certain linear operators acting on $\HHh$. It is also worth while to mention that every bounded subset
of $\CCC$ is the point spectrum of a bounded normal (even diagonal) operator on a nonseparable Hilbert space. However,
all densely defined linear operators in separable Hilbert spaces whose point spectra are non-Borel which appear in all
existing examples in the literature are nonclosed. It was therefore quite natural to ask for such an example in the class
of closed operators. In the recent paper we will show that such an example does not exist. More generally, we will prove
the following result (see \THM{wcompact} and {\THM{general}):
\begin{quote}
\textit{Let $X$ be a (real or complex) Banach space and $T$ be a closable operator in $X$ whose domain is separable
and weakly $\sigma$-compact. Then the point spectrum of $T$ is $\FFf_{\sigma}$ and for each $n \geqsl 1$ the set of all
scalars $\lambda$ such that the kernel of $T - \lambda$ is $n$-dimensional is $\FFf_{\sigma\delta}$.}
\end{quote}
In particular, the above theorem applies to the point spectrum of a closed operator in $\HHh$, which solves the problem
posed in \cite{j-s} (see Question after Example 2 there). Partial solution of this issue for certain classes of closed
operators in Hilbert spaces may be found e.g. in \cite[Corollary~7]{s-sz}. In \EXM{bounded} we give a simple example
of a bounded densely defined operator in $\HHh$ whose point spectrum coincides with an arbitrarily given uncountable subset
of the unit disc. The problem whether the point spectrum of a bounded operator in a separable nonreflexive Banach space
is $\FFf_{\sigma}$ we leave as open.\par
Since the celebrated paper by Cowen and Douglas \cite{c-d} was published, the point spectra of bounded linear operators are
widely investigated and those operators which have these spectra rich have attracted a growing interest,
see e.g. \cite{mc}, \cite{zhu}, \cite{jiang} or \cite{j-s} and references there.

\textbf{Notation.} In this paper Banach spaces are real or complex and they need not be separable, and $\KKK$ is the field
of real or complex numbers. By an \textit{operator in} a Banach space $X$ we mean any linear function from a linear
subspace of $X$ into $X$ and such an operator is \textit{on} $X$ if its domain is the whole space. For an operator $T$
in $X$, $\DdD(T)$, $\NnN(T)$, $\RrR(T)$ and $\overline{\RrR}(T)$ denote the domain, the kernel, the range and the closure
of the range of $T$, respectively. For a scalar $\lambda \in \KKK$, $T - \lambda$ stands for the operator given
by the rules: $\DdD(T - \lambda) = \DdD(T)$ and $(T - \lambda)(x) = Tx - \lambda x$ ($x \in \DdD(T)$). The \textit{point
spectrum} of $T$, $\sigma_p(T)$, is the set of all eigenvalues of $T$; that is, $\sigma_p(T)$ consists of all scalars
$\lambda \in \KKK$ such that $\NnN(T - \lambda)$ is nonzero. The operator $T$ is \textit{closed} iff its graph $\Gamma(T)
:= \{(x,Tx)\dd\ x \in \DdD(T)\}$ is a closed subset of $X \times X$. $T$ is \textit{closable} iff the norm closure
of $\Gamma(T)$ in $X \times X$ is the graph of some operator in $X$. Whenever $E$ and $F$ are Banach spaces, $\BBb(E,F)$
stands for the space of all bounded operators from the whole space $E$ to $F$. A subset of a topological space is said to
be \textit{$\sigma$-compact} if it is the union of a countable family of compact subsets of the space.
A \textit{weakly $\sigma$-compact} subset of a Banach space is a set which is $\sigma$-compact with respect to the weak
topology of the space. A \textit{Borel} subset of a topological space is a member of the $\sigma$-algebra generated by all
open sets in the space. Adapting the notation proposed by A.H. Stone \cite{ahstone}, a subset of a topological space which
is the intersection of an open and a closed set is called by us (\textit{of type}) \textit{$\FFf \cap \GGg$}. Similarly,
any set being the intersection of a $\GGg_{\delta}$ and an $\FFf_{\sigma}$ set will be said to be
$\FFf_{\sigma} \cap \GGg_{\delta}$.

\SECT{Arbitrary closed operator in a Banach space}

In this and next sections $X$ denotes a Banach space.\par
Recall that a \textit{Souslin} set is a metrizable space which is the continuous image of a separable complete metric
space (see the beginning of Chapter~XIII of \cite{k-m} and Theorem~XIII.1.1 there, or Appendix in \cite{takesaki}). Souslin
subsets of separable complete metric spaces are \textit{absolutely measurable}, i.e. whenever $A$ is a Souslin subset
of a separable complete metric space $Y$ and $\mu$ is a nonnegative $\sigma$-finite Borel measure on $Y$, there are Borel
subsets $B$ and $C$ of $Y$ such that $B \subset A \subset C$ and $\mu(C \setminus B) = 0$
(see e.g. \cite[Theorem~A.13]{takesaki}).\par
We begin with

\begin{pro}{souslin}
Let $T$ be a closable operator in $X$ whose domain is the image of a separable Banach space under a bounded operator.
The point spectrum of $T$ is a Souslin subset of $\KKK$. In particular, $\sigma_p(T)$ is absolutely measurable.
\end{pro}
\begin{proof}
Let $E$ be a separable Banach space and $S \in \BBb(E,X)$ be such that $S(E) = \DdD(T)$. Passing to the quotient of $E$
by $\NnN(S)$, we may assume that $S$ is one-to-one. Since $T$ is closable, the operator $C = TS\dd E \to X$ is bounded
(by the Closed Graph Theorem). Notice that $\sigma_p(T) = \{\lambda \in \KKK\dd\ \NnN((T - \lambda)S) = \NnN(C - \lambda S)
\neq \{0\}\}$. Let $W$ be the set of all $x \in E$ such that $\|x\| = 1$ and $Cx = \lambda(x) Sx$ for some $\lambda(x)
\in \KKK$. We have obtained a function $\lambda\dd W \to \KKK$ with $\lambda(W) = \sigma_p(T)$. So, to finish the proof
it suffices to show that $W$ is closed and $\lambda$ is continuous.\par
Suppose $x_n \in W$ and $x_n \to x \in X$ as $n \to \infty$. Then $\|x\| = 1$ and hence $Sx \neq 0$. We infer from this
that the sequence $(\lambda(x_n))_{n=1}^{\infty}$ is bounded because $|\lambda(x_n)| = \frac{\|Cx_n\|}{\|Sx_n\|}
\to \frac{\|Cx\|}{\|Sx\|}\ (n \to \infty)$. Now if $(\lambda(x_{n_k}))_{k=1}^{\infty}$ is any subsequence which converges
to some $w \in \KKK$, then $Cx = w Sx$. This shows that $x \in W$ and $w = \lambda(x)$. So, $W$ is closed and $\lambda$
is continuous.
\end{proof}

Now since the domain of a closed operator is the image of its graph under the projection onto the first factor,
\PRO{souslin} applies to closed operators in separable Banach spaces and gives

\begin{cor}{cl-Souslin}
The point spectrum of a closed operator in a separable Banach space is Lebesgue measurable.
\end{cor}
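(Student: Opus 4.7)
The plan is to deduce the corollary as a one-step application of \PRO{souslin}, following the hint given by the author in the sentence that precedes the statement: the domain of any closed operator is the image of its graph under projection onto the first factor, so it suffices to realize the graph as a separable Banach space.

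Concretely, let $T$ be a closed operator in a separable Banach space $X$. I would take as the auxiliary Banach space the graph $\Gamma(T) \subset X \times X$ itself, equipped with the norm inherited from the product (equivalent to the graph norm on $\DdD(T)$). Since $T$ is closed, $\Gamma(T)$ is a closed linear subspace of the separable Banach space $X \times X$, and every subspace of a separable metric space is separable; so $\Gamma(T)$ is a separable Banach space in its own right. The projection $\pi \colon \Gamma(T) \to X$, $(x,Tx) \mapsto x$, is a bounded linear operator with $\pi(\Gamma(T)) = \DdD(T)$, exhibiting $\DdD(T)$ in the form required by \PRO{souslin}. Since any closed operator is \emph{a fortiori} closable, \PRO{souslin} applies directly and yields that $\sigma_p(T)$ is a Souslin subset of $\KKK$.

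To conclude, I would invoke the fact recorded at the beginning of Section 2: Souslin subsets of separable complete metric spaces are absolutely measurable. Since $\KKK$ (identified with $\RRR$ or $\RRR^2$) is a separable complete metric space and planar Lebesgue measure is a $\sigma$-finite Borel measure on it, absolute measurability of $\sigma_p(T)$ immediately implies its Lebesgue measurability. There is no real obstacle in this argument: the whole content of the corollary is the observation that the graph of $T$ plays the role of the separable Banach space in the hypothesis of \PRO{souslin}, and everything else is a pure invocation of results already at our disposal.
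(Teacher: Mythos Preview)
Your proposal is correct and follows exactly the approach the paper indicates in the sentence preceding the corollary: use the graph $\Gamma(T)$, which is a separable Banach space since $T$ is closed and $X\times X$ is separable, together with the bounded projection onto the first factor, to place the closed operator under the hypotheses of \PRO{souslin}. The only addition you make is spelling out why absolute measurability yields Lebesgue measurability, which is immediate and entirely appropriate.
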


In the next section (\COR{reflexive}) we shall prove that the point spectrum of a closed operator in a separable
reflexive Banach space is Borel (even $\FFf_{\sigma}$). We do not know whether the assumption of reflexivity may
be relaxed in this.\\
\nl
\textbf{Question 1.} Is the point spectrum of a bounded operator on a separable Banach space Borel?

\SECT{Operators with weakly $\sigma$-compact domains}

For an operator $T$ in $X$, a subset $K$ of $X$ and a nonnegative real constant $M$, let
$$
\Lambda_T(K,M) = \{w \in \KKK\dd\ \NnN(T - w) \cap K \neq \varempty \textup{ and } |w| \leqsl M\}.
$$
Notice that $\Lambda_T(K,M)$ consists of all $w \in \KKK$ with $|w| \leqsl M$ provided $0 \in K$. The main tool
of this section is the following

\begin{lem}{wcompact}
If $T$ is a closable operator in $X$ and $K$ is a weakly compact subset of $\DdD(T)$, then the set $\Lambda_T(K,M)$
is compact for every $M \geqsl 0$.
\end{lem}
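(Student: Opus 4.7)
\textbf{Proof plan for \LEM{wcompact}.} Since $\Lambda_T(K,M) \subset \{w \in \KKK \dd |w| \leqsl M\}$ is already bounded, it suffices to show that $\Lambda_T(K,M)$ is closed in $\KKK$. So the plan is to take a sequence $(w_n)$ in $\Lambda_T(K,M)$ converging to some $w$ and to produce a vector $x \in \NnN(T-w) \cap K$, from which $w \in \Lambda_T(K,M)$ follows (with $|w| \leqsl M$ by continuity of the modulus).

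For each $n$, fix $x_n \in K$ with $Tx_n = w_n x_n$. The key tool is the Eberlein-\v{S}mulian theorem: since $K$ is weakly compact, it is weakly sequentially compact, so after passing to a subsequence I may assume that $(x_n)$ converges weakly to some $x \in X$. Because weakly compact subsets of a Banach space are weakly closed, $x$ lies in $K$, and the hypothesis $K \subset \DdD(T)$ gives $x \in \DdD(T)$ \textit{for free}. This is the crucial place where the weak compactness of $K$ (rather than mere norm boundedness) is used, and also where it matters that $K$ is contained in the domain.

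Next, since $w_n \to w$ in $\KKK$ and $x_n \to x$ weakly, the linearity and joint continuity of scalar multiplication in the weak topology yield $Tx_n = w_n x_n \to wx$ weakly. So in $X \times X$, equipped with the product of the weak topology, the points $(x_n, Tx_n) \in \Gamma(T)$ converge to $(x, wx)$. Now I invoke closability: let $\overline{T}$ denote the closure of $T$. The graph $\Gamma(\overline{T})$ is a norm-closed linear subspace of $X \times X$, hence convex and norm-closed, hence (by Mazur's theorem) also closed in the product weak topology. Consequently $(x, wx) \in \Gamma(\overline{T})$, that is, $\overline{T} x = wx$. Since $x \in \DdD(T)$, we have $Tx = \overline{T} x = wx$, so $x \in \NnN(T-w) \cap K$ as required.

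The only nontrivial ingredients are standard: Eberlein-\v{S}mulian to pass from weak compactness to weak sequential compactness, and Mazur's theorem to upgrade norm-closedness of the graph of $\overline{T}$ to weak-closedness. The mild subtlety I would flag as the main obstacle is notational rather than mathematical: one has to remember to distinguish $T$ from $\overline{T}$ and to exploit $x \in K \subset \DdD(T)$ precisely at the final step to eliminate $\overline{T}$ from the conclusion.
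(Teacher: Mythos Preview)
Your proof is correct and rests on the same key mechanism as the paper's: weak compactness of $K$ supplies a limit vector $x\in K\subset\DdD(T)$, and the norm closure of $\Gamma(T)$ (a closed linear subspace, hence weakly closed by Mazur) traps the limit pair $(x,wx)$ in the graph of the closure of $T$.

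The organization differs slightly. You show $\Lambda_T(K,M)$ is closed by chasing a convergent sequence of eigenvalues, invoking Eberlein--\v{S}mulian to extract a weakly convergent subsequence of the associated eigenvectors. The paper instead works from the other side: after discarding the trivial case $0\in K$, it defines the eigenvalue map $\lambda\dd W\to\KKK$ on the set $W\subset K$ of eigenvectors with eigenvalue of modulus at most $M$, and shows via nets that $W$ is weakly closed in $K$ and $\lambda$ is weak-to-norm continuous, whence $\Lambda_T(K,M)=\lambda(W)$ is the continuous image of a compact set. Your route avoids the need to single out $0$ and is arguably more direct; the paper's route avoids Eberlein--\v{S}mulian by using nets throughout. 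Neither gains anything substantive over the other.
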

\begin{proof}
We may and do assume that $0 \notin K$. Let $W$ be the set of all $x \in K$ for which there is (unique)
$\lambda(x) \in \KKK$ such that $Tx = \lambda(x) x$ and $|\lambda(x)| \leqsl M$. Thus we have obtained a function
$\lambda\dd W \to \KKK$. Since $\lambda(W) = \Lambda_T(K,M)$, it suffices to show that $W$ is weakly compact
and $\lambda$ is continuous when $W$ is equipped with the weak topology.\par
Let $\XxX = (x_{\sigma})_{\sigma \in \Sigma}$ be a net in $W$ which is weakly convergent to some $x \in K$. We need to show
that $x \in W$ and $\lim_{\sigma \in \Sigma} \lambda(x_{\sigma}) = \lambda(x)$. If $(x_{\tau})_{\tau \in \Sigma'}$
is a subnet of $\XxX$ such that $\lim_{\tau \in \Sigma'} \lambda(x_{\tau}) = w \in \KKK$,
then $(x_{\tau},Tx_{\tau})_{\tau \in \Sigma'}$ is weakly convergent to $(x,w x)$. Since the norm closure of $\Gamma(T)$
is weakly closed (and is the graph of some operator), we infer from this that $Tx = wx$ and thus $x \in W$
and $\lambda(x) = w$. Since $\lambda(W)$ is bounded and any convergent subnet
of $(\lambda(x_{\sigma}))_{\sigma \in \Sigma}$ has the same limit, the latter net converges to $\lambda(x)$ and we are
done.
\end{proof}

With use of the foregoing result we now easily prove the following

\begin{pro}{wcompact-0}
If $T$ is a closable operator in $X$ such that $\DdD(T) \setminus \{0\}$ is weakly $\sigma$-compact, then $\sigma_p(T)$
is $\FFf_{\sigma}$.
\end{pro}
\begin{proof}
Write $\DdD(T) \setminus \{0\} = \bigcup_{n=1}^{\infty} K_n$ with each $K_n$ weakly compact, note that
$\lambda \in \sigma_p(T)$ iff $\lambda \in \Lambda_T(K_n,m)$ for some natural $n$ and $m$ and apply \LEM{wcompact}.
\end{proof}

For applications of the above result, we need the next well known fact. For reader convenience, we give its short proof.

\begin{lem}{sep-wcomp}
Every weakly compact subset of $X$ which is separable in the norm topology is weakly metrizable.
\end{lem}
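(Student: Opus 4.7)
The plan is to produce an explicit metric on $K$ that induces the weak topology. First I would replace $X$ by the closed linear span $Y$ of a countable norm-dense subset of $K$, so that $Y$ is a separable closed subspace containing $K$. Since the weak topology of $X$ restricted to $Y$ is weaker than the weak topology of $Y$, and since any $f\in Y^*$ extends by Hahn--Banach to some $\tilde f\in X^*$, it is enough to exhibit a metric generating the weak topology of $Y$ on the weakly compact set $K\subset Y$.

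Next I would build a countable point-separating family in $Y^*$. Picking a countable dense subset $\{y_n\}$ of $Y\setminus\{0\}$ and applying Hahn--Banach, choose $g_n\in Y^*$ with $\|g_n\|=1$ and $g_n(y_n)=\|y_n\|$. A routine triangle-inequality argument (if $\|y-y_n\|<\|y\|/3$ then $g_n(y)\geqsl \|y_n\|-\|y-y_n\|>\|y\|/3$) shows that $\{g_n\}$ separates the points of $Y$. Define on $Y$
\[
d(u,v)=\sum_{n=1}^\infty 2^{-n}\min(|g_n(u-v)|,1).
\]
Uniform convergence of the series together with weak continuity of each $g_n$ makes $d$ weakly continuous on $Y\times Y$, so the $d$-topology is coarser than the weak topology; and point-separation of $\{g_n\}$ makes $d$ an honest metric.

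Finally I would invoke the standard compact-Hausdorff argument: the identity map $(K,\textup{weak})\to(K,d)$ is a continuous bijection from a compact space to a Hausdorff space, hence a homeomorphism. Thus $d$ metrizes the weak topology on $K$.

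The only place that might look delicate is the choice of the ambient space in which to build $\{g_n\}$: one must make sure that extending by Hahn--Banach from $Y^*$ to $X^*$ does not corrupt either weak continuity on $K$ or point-separation (both survive trivially). Everything else is routine.
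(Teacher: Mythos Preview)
Your proof is correct and follows essentially the same approach as the paper: produce a countable family of norm-one functionals that separates the points of $K$, then use weak compactness of $K$ to turn the resulting continuous injection into a metrizable space into a homeomorphism. The paper does this slightly more directly---it applies Hahn--Banach to a countable dense subset of $(K-K)\setminus\{0\}$ rather than passing to a separable closed subspace first, and it phrases the conclusion as an embedding of $K$ into the metrizable cube $\Delta^{\NNN}$ instead of writing down the explicit metric $d$---but these are cosmetic differences, not substantive ones.
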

\begin{proof}
Let $K$ be a separable weakly compact subset of $X$. Then $(K - K) \setminus \{0\}$ is separable as well and thus there
is a sequence of linear functionals $f_n\dd X \to \KKK$ of norm $1$ such that for every nonzero $z \in K - K$ there is
$n$ with $f_n(z) \neq 0$. Observe that then the family $\{f_n\}_{n \in \NNN}$ separates the points of $K$. Finally, since
$K$ is weakly compact, the formula $K \ni x \mapsto (f_n(x))_{n \in \NNN} \in \Delta^{\NNN}$ with $\Delta =
\{w \in \KKK\dd\ |w| \leqsl 1\}$ defines a topological embedding of $K$ (equipped with the weak topology) into the compact
metrizable space $\Delta^{\NNN}$.
\end{proof}

Now we have

\begin{pro}{sep-wcomp}
If $\DDd$ is a linear subspace of $X$, then $\DDd \setminus \{0\}$ is weakly $\sigma$-compact iff $\DDd$ is separable
and weakly $\sigma$-compact as well.
\end{pro}
\begin{proof}
First assume that $\DDd$ is separable and $\DDd = \bigcup_{n=1}^{\infty} K_n$ with each $K_n$ weakly compact.
By \LEM{sep-wcomp}, $K_n$ is weakly metrizable and thus $L_n := K_n \setminus \{0\}$ is an $\FFf_{\sigma}$-subset of $K_n$
with respect to the weak topology. So, $\DDd \setminus \{0\} = \bigcup_{n=1}^{\infty} L_n$ and each $L_n$ is weakly
$\sigma$-compact.\par
Conversely, if $\DDd \setminus \{0\}$ is weakly $\sigma$-compact, so is $\DDd$ and $\{0\}$ is weakly $\GGg_{\delta}$
in $\DDd$. Thus, by the definition of the weak topology, there are sequences $(f_n)_{n=1}^{\infty}$
and $(\epsi_n)_{n=1}^{\infty}$ of continuous linear functionals on $X$ and of positive real numbers (respectively)
such that $\{0\} = \DDd \cap \{x \in X\dd\ |f_n(x)| < \epsi_n,\ n=1,2,\ldots\}$. In particular, $\bigcap_{n=1}^{\infty}
\NnN(f_n) \cap \DDd = \{0\}$ and therefore the function $\psi\dd \DDd \ni x \mapsto (f_n(x))_{n=1}^{\infty} \in
\KKK^{\NNN}$ is one-to-one. What is more, $\psi$ is continuous with respect to the weak topology on $\DDd$ and the product
one on $\KKK^{\NNN}$. So, $\psi$ restricted to any weakly compact subset $K$ of $\DDd$ is a topological embedding
and therefore every such $K$ is weakly separable. We conclude from this that $\DDd$ itself is weakly separable,
being weakly $\sigma$-compact. Finally, if $A$ is a countable weakly dense subset of $\DDd$ and $\EEe$ is the norm closure
of the linear span of $A$, then $\EEe$ is separable and weakly closed. The latter implies that $\DDd \subset \EEe$
and we are done.
\end{proof}

It follows from \PRO{sep-wcomp} that \PRO{wcompact-0} is equivalent to

\begin{thm}{wcompact}
If $T$ is a closable operator in $X$ whose domain is separable and weakly $\sigma$-compact, then $\sigma_p(T)$
is $\FFf_{\sigma}$.
\end{thm}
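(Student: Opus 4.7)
The plan is to reduce the theorem directly to Propositions~\ref{pro:wcompact-0} and \ref{pro:sep-wcomp}. The hypothesis is that $\DdD(T)$ is separable and weakly $\sigma$-compact; by the ``only if'' direction of \PRO{sep-wcomp} applied with $\DDd = \DdD(T)$, this is equivalent to the statement that $\DdD(T) \setminus \{0\}$ is weakly $\sigma$-compact. Once this is in hand, \PRO{wcompact-0} applies verbatim and gives that $\sigma_p(T)$ is $\FFf_{\sigma}$. So the theorem is essentially a repackaging of the previous two propositions.

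For completeness it is worth recalling why the chain works. The weight of \PRO{wcompact-0} rests on \LEM{wcompact}: if $K \subset \DdD(T)$ is weakly compact and $M \geqsl 0$, then $\Lambda_T(K,M)$ is compact (here closability is used to ensure that the graph of $T$ is weakly closed, so that taking a weak limit of pairs $(x_\sigma, \lambda(x_\sigma) x_\sigma)$ preserves the eigenvalue equation). Writing $\DdD(T) \setminus \{0\} = \bigcup_{n} K_n$ with $K_n$ weakly compact, one then has $\sigma_p(T) = \bigcup_{n,m} \Lambda_T(K_n,m)$, a countable union of compact sets.

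The only point where something has to be done is verifying that the passage from ``$\DdD(T)$ separable and weakly $\sigma$-compact'' to ``$\DdD(T) \setminus \{0\}$ weakly $\sigma$-compact'' is legitimate; but this is exactly the content of \PRO{sep-wcomp}, whose proof uses \LEM{sep-wcomp} (separable weakly compact sets are weakly metrizable), so that $\{0\}$ is a weak $\GGg_\delta$ in any weakly compact piece, and its complement within that piece is weakly $\FFf_\sigma$.

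Thus the proof is essentially a one-liner: apply \PRO{sep-wcomp} and then \PRO{wcompact-0}. There is no real obstacle; the conceptual work has been done in the preceding lemmas and propositions.
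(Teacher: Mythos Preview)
Your proposal is correct and follows exactly the paper's approach: the paper simply remarks that \PRO{sep-wcomp} shows \PRO{wcompact-0} is equivalent to \THM{wcompact}, which is precisely what you do. One small slip: you invoke the ``only if'' direction of \PRO{sep-wcomp}, but you actually need the ``if'' direction (from ``$\DDd$ separable and weakly $\sigma$-compact'' to ``$\DDd \setminus \{0\}$ weakly $\sigma$-compact''); your surrounding explanation makes clear you have the right implication in mind.
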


By Baire's theorem, a Banach space is weakly $\sigma$-compact iff it is reflexive. Thus \THM{wcompact} applies
mainly to reflexive spaces. For example:

\begin{cor}{imrefl}
If $T$ is a closable operator in $X$ whose domain is separable and is the image of a reflexive Banach space under a bounded
operator, then $\sigma_p(T)$ is $\FFf_{\sigma}$.
\end{cor}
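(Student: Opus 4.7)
The plan is to deduce this corollary directly from \THM{wcompact} by showing that, under the hypotheses, the domain $\DdD(T)$ is weakly $\sigma$-compact (separability is already assumed). So the only work is to pass from ``image of a reflexive space under a bounded operator'' to ``weakly $\sigma$-compact''.

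First, write $E = \bigcup_{n=1}^{\infty} B_n$, where $B_n = \{e \in E\dd\ \|e\| \leqsl n\}$. Since $E$ is reflexive, the Banach--Alaoglu theorem (in its reflexive-space form) tells us that each $B_n$ is weakly compact in $E$. Next, recall the standard fact that a bounded linear operator $S \in \BBb(E,X)$ is continuous when both $E$ and $X$ are given their weak topologies; this follows immediately from the definition of the weak topology together with $f \circ S \in E^*$ for every $f \in X^*$. Consequently $S(B_n)$ is weakly compact in $X$, and
\[
\DdD(T) \;=\; S(E) \;=\; \bigcup_{n=1}^{\infty} S(B_n)
\]
is weakly $\sigma$-compact.

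Combining this with the separability of $\DdD(T)$ (which is part of the hypothesis), the operator $T$ satisfies all the assumptions of \THM{wcompact}, and we conclude that $\sigma_p(T)$ is $\FFf_{\sigma}$.

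I do not anticipate any genuine obstacle: every step is a one-line consequence of a standard fact (reflexivity $\Leftrightarrow$ weak compactness of the closed unit ball; bounded operators are weak-to-weak continuous). The only mildly delicate point is to notice that one does not need to reprove separability of $\DdD(T)$ from that of $E$ and the continuity of $S$, since separability is explicitly part of the corollary's hypothesis; this avoids having to invoke \PRO{sep-wcomp} a second time.
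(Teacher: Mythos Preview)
Your proof is correct and follows exactly the route the paper intends: the paper does not spell out a proof for this corollary but simply places it immediately after the remark that a Banach space is weakly $\sigma$-compact iff it is reflexive, leaving the reader to observe (as you do) that the image of a reflexive space under a bounded operator is weakly $\sigma$-compact and then to apply \THM{wcompact}. Your argument makes this explicit and is entirely in line with the paper's reasoning.
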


Now argument used in the note just before \COR{cl-Souslin} shows that

\begin{cor}{reflexive}
The point spectrum of a closed operator in a separable reflexive Banach space is $\FFf_{\sigma}$. In particular, the point
spectrum of a closed densely defined operator in a separable Hilbert space is $\FFf_{\sigma}$.
\end{cor}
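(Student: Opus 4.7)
The plan is to reduce \COR{reflexive} directly to \COR{imrefl}, mirroring the argument that deduces \COR{cl-Souslin} from \PRO{souslin}. Concretely, let $T$ be a closed operator in a separable reflexive Banach space $X$. Since $X$ is separable reflexive, so is the product $X \times X$ (with any equivalent product norm), and the graph $\Gamma(T)$ is a closed linear subspace of $X \times X$, hence is itself a separable reflexive Banach space.

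Next, consider the bounded linear projection $\pi\dd X \times X \to X$, $\pi(x,y) = x$. By the very definition of the graph, $\pi(\Gamma(T)) = \DdD(T)$. Thus $\DdD(T)$ is the image of a separable reflexive Banach space under a bounded operator. Moreover $\DdD(T)$ is separable (being the continuous image of a separable space). Therefore the hypotheses of \COR{imrefl} are satisfied, which yields that $\sigma_p(T)$ is $\FFf_{\sigma}$.

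For the ``In particular'' part, a Hilbert space is reflexive, so any separable Hilbert space falls under the first statement; the density of the domain is not needed here but is harmless. The only thing one has to verify is that the proof does not implicitly require closability to be proved separately, but that is automatic: $T$ closed implies $T$ closable.

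There is no real obstacle in this step; the substantive work has already been done in \LEM{wcompact}, \PRO{wcompact-0}, \PRO{sep-wcomp}, \THM{wcompact} and \COR{imrefl}. The only point worth checking carefully is that one is entitled to identify $\DdD(T)$ as the image of a reflexive space under a bounded operator, and this follows from the two observations that closed subspaces of reflexive spaces are reflexive and that $\Gamma(T)$ is precisely such a subspace of $X \times X$.
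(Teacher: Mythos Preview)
Your proof is correct and follows essentially the same approach as the paper: the paper simply invokes ``the argument used in the note just before \COR{cl-Souslin}'', which is precisely the observation that $\DdD(T)$ is the image of the graph $\Gamma(T)$ under the first-coordinate projection, together with the fact that $\Gamma(T)$, being a closed subspace of the separable reflexive space $X \times X$, is itself separable and reflexive, so \COR{imrefl} applies. Your write-up merely makes these implicit steps explicit.
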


The second statement of the above result answers in the affirmative the question posed by Jung and Stochel in \cite{j-s}.

\begin{rem}{other}
\COR{reflexive} for bounded operators may also be proved in a different way. Namely, if $T$ is a bounded operator
on a separable reflexive Banach space $X$ (respectively a closed densely defined operator in a separable Hilbert space
$\HHh$), then, by the reflexivity of $X$, $\NnN(T) = \{0\}$ iff the range of the dual operator $T'\dd X' \to X'$
(respectively iff the range of the adjoint operator $T^*$) is dense in $X'$ (in $\HHh$). So, $\sigma_p(T)$ consists
of precisely those $\lambda \in \KKK$ for which $\RrR(S_{\lambda})$ is \textbf{not} dense in $X'$ (in $\HHh$) where
$S_{\lambda} = T' - \lambda$ ($S_{\lambda} = T^* - \bar{\lambda}$). Consequently, if $\{y_n\}_{n=1}^{\infty}$ is a dense
sequence in $X'$ (in $\HHh$), then
$$
\sigma_p(T) = \bigcup_{n,m \geqsl 1} \bigcap_{x \in \DdD(S_0)} F(n,m,x)
$$
with $F(n,m,x) := \{\lambda \in \KKK\dd\ \|S_{\lambda}x - y_n\| \geqsl \frac1m\}$. We infer from the closedness
of $F(n,m,x)$'s that indeed $\sigma_p(T)$ is $\FFf_{\sigma}$. This elementary and simpler proof does not work in case
when $T$ is only closable. Moreover, this approach hides the crucial property of reflexivity for this issue revealed
by \LEM{wcompact}, namely the weak $\sigma$-compactness of the domain of an operator.
\end{rem}

\begin{exm}{bounded}
Let $\HHh$ be a complex separable infinite-dimensional Hilbert space and let $S\dd \HHh \to \HHh$ be the backward shift
with respect to the orthonormal basis $\EEe = (e_n)_{n=0}^{\infty}$ of $\HHh$; that is, $S e_0 = 0$ and $S e_n = e_{n-1}$
for $n \geqsl 1$. Let $\DDD$ be the open unit disc in $\CCC$ and $\psi\dd \DDD \ni z \mapsto \sum_{n=0}^{\infty} z^n e_n
\in \HHh$. Recall that $\sigma_p(S) = \DDD$, $S \psi(z) = z \psi(z)$ for each $z \in \DDD$ and the image of $\psi$
is linearly independent. What is more, the linear span $\DDd_F$ of $\psi(F)$ is dense in $\HHh$ for every uncountable
subset $F$ of $\DDD$. (To see the latter, note that if $x$ is orthogonal to $\psi(F)$, then $\sum_{n=0}^{\infty} x_n z^n =
0$ for each $z \in F$ where $x_n$ is the $n$-th Fourier coefficient of $x$ in the basis $\EEe$. Thus $x_n = 0$ for any $n$
and $x = 0$ as well.) So, the restriction $S_F$ of $S$ to $\DDd_F$ is a bounded densely defined operator in $\HHh$
such that $\sigma_p(S_F) = F$. The example shows that in \THM{wcompact} the domain of a closable operator has to have
additional properties beside the separability.
\end{exm}

\begin{exm}{Linf}
The inclusion map of $L^{\infty}[0,1]$ into $L^2[0,1]$, as the dual operator of the inclusion map of $L^2[0,1]$
into $L^2[0,1]$, is continuous with respect to the weak-* topology of the domain and the weak one in $L^2[0,1]$.
This implies that $L^{\infty}[0,1]$, considered as a subspace of $L^2[0,1]$, is weakly $\sigma$-compact. However,
there is no closed operator in $L^2[0,1]$ whose domain is $L^{\infty}[0,1]$ (compare with the remark on page~257
in \cite{ranges}). The example shows that \THM{wcompact} can have quite natural applications also for nonclosed operators
and that it is more general than \COR{imrefl} which does not apply here, since there is no bounded operator on a reflexive
Banach space into $L^2[0,1]$ whose image is $L^{\infty}[0,1]$ (again, by the remark on page~257 in \cite{ranges}).
\end{exm}

\SECT{Decomposition of the point spectrum}

Let $T$ be an operator in $X$. For $n \geqsl 1$ let $\sigma_{p[n]}(T)$ be the set of all $\lambda \in \KKK$ such that
$\dim \NnN(T - \lambda) = n$ and let $\sigma_{p[\infty]}(T) = \sigma_p(T) \setminus \bigcup_{n=1}^{\infty}
\sigma_{p[n]}(T)$. Our aim is to show that all just defined sets are Borel provided $T$ is closable and has separable
and weakly $\sigma$-compact domain. To do this, we need

\begin{lem}{lin-depend}
If $V$ is a $T_2$ topological vector space, then for each $n \geqsl 2$ the set $F[n]$ of all $(x_1,\ldots,x_n) \in V^n$
such that $x_1,\ldots,x_n$ are linearly dependent is closed in the product topology of $V^n$.
\end{lem}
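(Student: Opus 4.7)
The plan is to show that $F[n]$ is closed by a standard net argument, using the compactness of the unit sphere in $\KKK^n$ to extract a coefficient vector for the limit point.

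First I would fix a convergent net $(y^{(\sigma)})_{\sigma \in \Sigma}$ in $F[n]$, say $y^{(\sigma)} = (x_1^{(\sigma)}, \ldots, x_n^{(\sigma)}) \to (x_1, \ldots, x_n) =: y$ in $V^n$. The goal is to exhibit scalars $\alpha_1, \ldots, \alpha_n$, not all zero, with $\sum_{i=1}^n \alpha_i x_i = 0$. For each index $\sigma$, linear dependence of the tuple $y^{(\sigma)}$ provides a nonzero coefficient vector, which I would normalize to $\alpha^{(\sigma)} = (\alpha_1^{(\sigma)}, \ldots, \alpha_n^{(\sigma)}) \in \KKK^n$ with $\max_i |\alpha_i^{(\sigma)}| = 1$ and $\sum_i \alpha_i^{(\sigma)} x_i^{(\sigma)} = 0$.

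Next I would use compactness of the sphere $\{\alpha \in \KKK^n\dd\ \max_i |\alpha_i| = 1\}$ to pass to a subnet $(\alpha^{(\tau)})_{\tau \in \Sigma'}$ convergent to some $\alpha \in \KKK^n$ with $\max_i |\alpha_i| = 1$, in particular $\alpha \neq 0$. Along this subnet, joint continuity of scalar multiplication and addition in $V$ gives $\sum_i \alpha_i^{(\tau)} x_i^{(\tau)} \to \sum_i \alpha_i x_i$. Since each term of this net equals $0$, uniqueness of limits in the Hausdorff space $V$ forces $\sum_i \alpha_i x_i = 0$. Hence $(x_1, \ldots, x_n) \in F[n]$, and $F[n]$ is closed.

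The argument is essentially routine once one isolates the right compactness. The only delicate point is the normalization of the coefficients: without it the $\alpha^{(\sigma)}$ need not have a convergent subnet, and a limit could drift to $0$, which would not certify linear dependence of the limiting tuple. Renormalizing so the maximum modulus is $1$ sidesteps this, and the Hausdorff assumption is used precisely to ensure the limit of a constantly-zero subnet is the unique point $0 \in V$.
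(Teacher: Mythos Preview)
Your proof is correct. Both your argument and the paper's rest on the same core idea---normalize the coefficient vector so that it lives in a compact set, then exploit compactness---but the execution differs. The paper normalizes so that one coefficient equals $1$ and the remaining ones lie in the closed unit disc $\Delta$; this forces a decomposition $F[n] = \bigcup_{j=1}^n F_{\tau_j}$ according to which coordinate carries the dominant coefficient, and each piece is then seen to be closed because it is the projection of the closed set $\Phi^{-1}(\{0\}) \subset \Delta^{n-1} \times V^n$ along the compact factor $\Delta^{n-1}$. Your choice of the $\ell^\infty$-unit sphere as the normalization target lets you handle all tuples at once, avoiding the decomposition, at the price of running an explicit net argument rather than invoking the closed-map property of projections with compact fibre. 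Neither approach is deeper than the other; yours is a bit more streamlined, while the paper's is net-free.
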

\begin{proof}
Let $\Delta = \{\lambda \in \KKK\dd\ |\lambda| \leqsl 1\}$. For a permutation $\tau$ of $\{1,\ldots,n\}$ let $F_{\tau}$
be the set of all $n$-tuples $(x_1,\ldots,x_n) \in V^n$ such that $x_{\tau(1)} = \sum_{j=2}^n \lambda_j x_{\tau(j)}$
for some $\lambda_2,\ldots,\lambda_n \in \Delta$. Notice that $F[n] = \bigcup_{j=1}^n F_{\tau_j}$ where $\tau_j(1) = j$,
$\tau_j(j) = 1$ and $\tau_j(k) = k$ for $k \neq 1,j$. So, it suffices to show that $F_{\tau}$ is closed. But $F_{\tau} =
p(\Phi^{-1}(\{0\})$ where $p\dd \Delta^{n-1} \times V^n \to V^n$ is the projection onto the second factor and
$$
\Phi\dd \Delta^{n-1} \times V^n \ni (\lambda_2,\ldots,\lambda_n;x_1,\ldots,x_n) \mapsto
x_1 - \sum_{j=2}^n \lambda_j x_j \in V.
$$
Since $\Delta^{n-1}$ is compact, $p$ is a closed map and we are done.
\end{proof}

The main result of the section is the following

\begin{thm}{general}
If $T$ is a closable operator in $X$ whose domain is separable and weakly $\sigma$-compact, then $\sigma_{p[n]}(T)$
is $\FFf_{\sigma} \cap \GGg_{\delta}$ for finite $n$ and $\sigma_{p[\infty]}(T)$ is $\FFf_{\sigma\delta}$.
\end{thm}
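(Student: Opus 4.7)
The plan is to reduce the theorem to proving that, for every $n \geqsl 1$, the set $\sigma_{p[\geqsl n]}(T) := \{\lambda \in \KKK : \dim \NnN(T-\lambda) \geqsl n\}$ is $\FFf_{\sigma}$. Granted this, $\sigma_{p[n]}(T) = \sigma_{p[\geqsl n]}(T) \setminus \sigma_{p[\geqsl n+1]}(T)$ is manifestly of type $\FFf_{\sigma} \cap \GGg_{\delta}$, and $\sigma_{p[\infty]}(T) = \bigcap_{n \geqsl 1} \sigma_{p[\geqsl n]}(T)$ is $\FFf_{\sigma\delta}$.

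To set up the reduction, I would first invoke \PRO{sep-wcomp} to decompose $\DdD(T) = \bigcup_{m=1}^{\infty} K_m$ with each $K_m$ weakly compact, and, by replacing $K_m$ with $K_1 \cup \cdots \cup K_m$, arrange for the sequence to be increasing, so that any finite family of eigenvectors sits inside a single $K_m$. For each pair of positive integers $m, M$, set
$$W_{m,M} := \bigl\{(\lambda, x_1, \ldots, x_n) \in \Delta_M \times K_m^n : Tx_j = \lambda x_j \text{ for every } j\bigr\} \setminus \bigl(\Delta_M \times F[n]\bigr),$$
where $\Delta_M = \{w \in \KKK : |w| \leqsl M\}$ and $F[n] \subset X^n$ is the set of linearly dependent $n$-tuples from \LEM{lin-depend}. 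If $\pi$ denotes projection onto the first factor, the monotonicity of the $K_m$'s yields $\sigma_{p[\geqsl n]}(T) = \bigcup_{m,M \geqsl 1} \pi(W_{m,M})$, so it remains to show that each $\pi(W_{m,M})$ is $\FFf_{\sigma}$.

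By \LEM{sep-wcomp}, $K_m$ is weakly metrizable, so the ambient product $Y := \Delta_M \times K_m^n$, equipped with the product weak topology on the second factor, is compact metrizable. Adapting the weak-closedness argument from \LEM{wcompact} --- the graph of the closure of $T$ is a norm-closed linear subspace of $X \times X$, hence weakly closed, and the inclusion $K_m \subset \DdD(T)$ ensures that the limit equation $Tx_j = \lambda x_j$ actually holds for $T$ itself --- the set $C_{m,M} := \{(\lambda, \vec x) \in Y : Tx_j = \lambda x_j\}$ is closed in $Y$, while \LEM{lin-depend} gives closedness of $\Delta_M \times F[n]$ in $Y$. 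Thus $W_{m,M}$ is the intersection of a closed set with an open set in a compact metric space, hence $\FFf_{\sigma}$; since $\pi$ sends compact sets to compact sets, $\pi(W_{m,M})$ is itself a countable union of compacts in $\KKK$. The chief obstacle is exactly the weak closedness of $C_{m,M}$, which is the decisive place where the weak $\sigma$-compactness of $\DdD(T)$ enters; a pleasant byproduct of isolating \LEM{lin-depend} is that the delicate ``linear independence'' condition is handled by set subtraction, neatly sidestepping the fact that a weak limit of linearly independent $n$-tuples can fail to be linearly independent.
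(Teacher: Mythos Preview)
Your proof is correct and follows essentially the same strategy as the paper's: reduce to showing that $G_N=\{\lambda:\dim\NnN(T-\lambda)\geqsl N\}$ is $\FFf_\sigma$, use \LEM{lin-depend} to handle linear independence by set subtraction, exploit weak metrizability of the $K_m$'s to turn ``closed minus closed'' into a countable union of weakly compact sets, and then project. The only organizational difference is that the paper introduces the diagonal operator $S(x_1,\ldots,x_N)=(Tx_1,\ldots,Tx_N)$ on $\DdD(T)^N$ and invokes \LEM{wcompact} directly for~$S$, whereas you carry out the same compactness argument by hand inside $\Delta_M\times K_m^n$; your closedness of $C_{m,M}$ is exactly the content of that lemma. (A small remark: you do not actually need \PRO{sep-wcomp} here, since the hypothesis already gives $\DdD(T)=\bigcup_m K_m$ with $K_m$ weakly compact; the role of $F[n]$ makes removing $0$ unnecessary.)
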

\begin{proof}
First fix finite $N > 0$. Let $F[1] = \{0\} \subset X$ and $F[N]$ be as in the statement of \LEM{lin-depend} for $N > 1$.
By \LEM{lin-depend}, $F[N]$ is weakly closed in $X^N$. Write $\DdD(T) = \bigcup_{n=1}^{\infty} K_n$ with each $K_n$ weakly
compact. By \LEM{sep-wcomp}, all $K_n$'s are weakly metrizable and hence $(\prod_{j=1}^N K_{n_j}) \setminus F[N]$ is
$\FFf_{\sigma}$ in $\prod_{j=1}^N K_{n_j}$ for any $n_1,\ldots,n_N$ when each $K_n$ is equipped with the weak topology.
So, $\DdD(T)^N \setminus F[N]$ may be written in the form
\begin{equation}\label{eqn:Ln}
\DdD(T)^N \setminus F[N] = \bigcup_{n=1}^{\infty} L_n
\end{equation}
where each $L_n$ is a weakly compact subset of $X^N$. Put $S\dd \DdD(T)^N \ni (x_1,\ldots,x_N) \mapsto (Tx_1,\ldots,Tx_N)
\in X^N$ and observe that $S$ is a closable operator in $X^N$. Thanks to \LEM{wcompact}, the set $\Lambda_S(L_n,m)$
is compact for all natural $n$ and $m$. So, $G_N := \bigcup_{n,m} \Lambda_S(L_n,m)$ is $\FFf_{\sigma}$. But $G_N$ is
the set of all $\lambda \in \sigma_p(T)$ such that $\NnN(T - \lambda)$ is at least$N$-dimensional, by \eqref{eqn:Ln}.\par
Now observe that for finite $n$, $\sigma_{p[n]}(T) = G_n \setminus G_{n+1}$ and thus $\sigma_{p[n]}(T)$ is
$\FFf_{\sigma} \cap \GGg_{\delta}$. Finally, since $\sigma_{p[\infty]}(T) = \bigcap_{n=1}^{\infty} G_n$,
$\sigma_{p[\infty]}(T)$ is $\FFf_{\sigma\delta}$.
\end{proof}

\SECT{Decomposition of the spectrum: Hilbert space}

From now on, $\HHh$ is a complex separable infinite-dimensional Hilbert space and $\BBb(\HHh) = \BBb(\HHh,\HHh)$.
Every subset of $\BBb(\HHh)$ which is Borel with respect to, respectively, the weak operator, the strong operator
or the norm topology is called by us, respectively, \textit{WOT-Borel}, \textit{SOT-Borel} and shortly \textit{Borel}.
Similarly, if $f\dd S \to \BBb(\HHh)$ with $S \subset \BBb(\HHh)$ is such that $S$ and the inverse image of any WOT-Borel
(respectively SOT-Borel; Borel) set is WOT-Borel (SOT-Borel; Borel) is said to be \textit{WOT-Borel} (\textit{SOT-Borel};
\textit{Borel}). A fundamental result in this topic says that every WOT-Borel set is SOT-Borel and conversely
(see e.g. \cite{effros}). Thus the same property holds true for WOT-Borel and SOT-Borel functions. Therefore we shall only
speak on WOT-Borel and Borel sets and functions. Whenever in the sequel the classes $\FFf_{\sigma}$, $\GGg_{\delta}$,
$\FFf \cap \GGg$, etc., appear, they are understood with respect to the norm topology.\par
We are interested in the decomposition of $\BBb(\HHh)$ into Borel sets each of which collects the operators of a similar
type. For $n = 0,1,2,\ldots$ let $\Sigma_n(\HHh)$ be the set of all operators $A \in \BBb(\HHh)$ such that $\dim \RrR(A) =
n$. Further, for $n,m = 0,1,2,\ldots,\infty$ let, respectively, $\Sigma_{n,m}^1(\HHh)$ and $\Sigma_{n,m}^0(\HHh)$
consist of all $A \in \BBb(\HHh)$ such that $\dim \NnN(A) = n$, $\dim \RrR(A)^{\perp} = m$, $\dim \overline{\RrR}(A) =
\infty$ and, respectively, $\RrR(A)$ is closed or not. Notice that all just defined sets of operators form a countable
decomposition of $\BBb(\HHh)$, denoted by $\Sigma(\HHh)$, into nonempty and pairwise disjoint sets. We want to show
that each of them is Borel in $\BBb(\HHh)$. To do this, we need

\begin{lem}{ker=n}
For each $n=0,1,2,\ldots,\infty$, let $\Sigma_{n,*}(\HHh)$ be the set of all operators $A \in \BBb(\HHh)$ such that
$\dim \NnN(A) = n$. Then $\Sigma_{0,*}(\HHh)$ is $\GGg_{\delta}$, $\Sigma_{n,*}(\HHh)$ is $\FFf_{\sigma} \cap
\GGg_{\delta}$ for finite $n > 0$ and $\Sigma_{\infty,*}(\HHh)$ is $\FFf_{\sigma\delta}$ in $\BBb(\HHh)$.
\end{lem}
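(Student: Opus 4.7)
The plan is to prove the following refined statement, from which all three cases of the lemma follow immediately: for each $n \geqsl 1$, the set
\begin{equation*}
\DdD_n := \{A \in \BBb(\HHh) \dd\ \dim \NnN(A) \geqsl n\}
\end{equation*}
is $\FFf_{\sigma}$ in the norm topology of $\BBb(\HHh)$. Granted this, $\Sigma_{0,*}(\HHh) = \BBb(\HHh) \setminus \DdD_1$ is $\GGg_{\delta}$; for finite $n > 0$, $\Sigma_{n,*}(\HHh) = \DdD_n \setminus \DdD_{n+1}$ is the intersection of an $\FFf_{\sigma}$ and a $\GGg_{\delta}$ set; and $\Sigma_{\infty,*}(\HHh) = \bigcap_{n \geqsl 1} \DdD_n$ is $\FFf_{\sigma\delta}$.

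To show $\DdD_n$ is $\FFf_{\sigma}$, I would mimic the construction used in the proof of \THM{general}, but with the parameter now varying over $A \in \BBb(\HHh)$ rather than over scalars. Let $F[n] \subset \HHh^n$ be as in \LEM{lin-depend} (setting $F[1] := \{0\}$), so $F[n]$ is weakly closed. Each closed ball of $\HHh^n$ is weakly compact and, being norm-separable, weakly metrizable by \LEM{sep-wcomp}; hence a weakly open subset of such a ball is weakly $\FFf_{\sigma}$, and we may write
\begin{equation*}
\HHh^n \setminus F[n] = \bigcup_{k=1}^{\infty} L_k
\end{equation*}
with each $L_k$ weakly compact in $\HHh^n$. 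Then $\DdD_n = \bigcup_{k=1}^{\infty} \Cc_k$, where
\begin{equation*}
\Cc_k := \{A \in \BBb(\HHh) \dd\ (x_1,\ldots,x_n) \in L_k \text{ and } A x_j = 0\ (j=1,\ldots,n) \text{ for some } x_1,\ldots,x_n\},
\end{equation*}
because any linearly independent $n$-tuple in $\NnN(A)$ lies in $\HHh^n \setminus F[n]$ and hence in some $L_k$.

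The core technical step, which replaces \LEM{wcompact} in this context, is to show that each $\Cc_k$ is norm-closed in $\BBb(\HHh)$. Given a net $A_i \to A$ in norm with $A_i \in \Cc_k$ witnessed by $v_i = (x_1^i, \ldots, x_n^i) \in L_k$, I would use weak compactness of $L_k$ to pass to a subnet along which $v_i$ converges weakly to some $v = (x_1, \ldots, x_n) \in L_k$. Weak compactness also forces $L_k$ to be norm-bounded, say by $M$, so that for each $j$,
\begin{equation*}
\|A x_j^i\| = \|(A - A_i) x_j^i\| \leqsl M \|A - A_i\| \longrightarrow 0,
\end{equation*}
which in particular gives $A x_j^i \to 0$ weakly; on the other hand $A x_j^i \to A x_j$ weakly because $A$ is bounded (hence weakly continuous) and $x_j^i \to x_j$ weakly. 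Therefore $A x_j = 0$ for every $j$, so $A \in \Cc_k$.

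The main obstacle is precisely this closedness step: one must combine weak convergence of the witnessing tuples with norm convergence of the operators. Norm convergence of $A_i$ is essential here, since weak convergence of $A_i$ alone would not produce the uniform estimate $\|A x_j^i - A_i x_j^i\| \leqsl M \|A - A_i\|$ that allows the switch between $A$ and $A_i$ before taking the weak limit. Once $\Cc_k$ is known to be norm-closed, the assembly of the three cases from $\DdD_n = \bigcup_k \Cc_k$ is a routine set-theoretic manipulation.
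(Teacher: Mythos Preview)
Your proof is correct and follows essentially the same approach as the paper: the paper's sets $G_N$ are your $\DdD_N$, and its closed sets $F_n$ are your $\Cc_k$. The only cosmetic difference is that the paper proves $\Cc_k$ is closed by observing that the map $\psi\dd \BBb(\HHh) \times L_k \ni (T,x) \mapsto T^{\times n} x \in \HHh^n$ is (norm~$\times$~weak)-to-weak continuous and then projects from the compact factor, whereas you unwind this into an explicit net argument.
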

\begin{proof}
We mimic the proof of \THM{general}. For fixed finite $N > 0$ let $F[N]$ be defined as there. Write
$\HHh^N \setminus F[N] = \bigcup_{n=1}^{\infty} L_n$ with each $L_n$ weakly compact in $\HHh^N$. For $T \in \BBb(\HHh)$
let $T^{\times N} \in \BBb(\HHh^N)$ denote the operator $\HHh^N \ni (x_1,\ldots,x_N) \mapsto (Tx_1,\ldots,Tx_N)
\in \HHh^N$. Since $L_n$ is bounded, the function $\psi\dd \BBb(\HHh) \times L_n \ni (T,x) \mapsto T^{\times N}x
\in \HHh^N$ is continuous when $\BBb(\HHh)$ is considered with the norm topology and $L_n$ and $\HHh^N$ with the weak one.
Hence $\psi^{-1}(\{0\})$ is closed in $\BBb(\HHh) \times L_n$ in the product topology of these topologies. Finally, since
$L_n$ is weakly compact, the set $F_n := p(\psi^{-1}(\{0\}))$ is closed in the norm topology of $\BBb(\HHh)$
where $p\dd \BBb(\HHh) \times L_n \to \BBb(\HHh)$ is the projection onto the first factor.
Thus $G_N := \bigcup_{n=1}^{\infty} F_n$ is $\FFf_{\sigma}$. Note that $G_N$ coincides with the set of all
$A \in \BBb(\HHh)$ for which $\dim \NnN(A) \geqsl N$.\par
Now we have $\Sigma_{n,*}(\HHh) = G_n \setminus G_{n+1}$ for finite $n > 0$, $\Sigma_{\infty,*}(\HHh) =
\bigcap_{n=1}^{\infty} G_n$ and $\Sigma_{0,*}(\HHh) = \BBb(\HHh) \setminus G_1$ which clearly finishes the proof.
\end{proof}

Now let $\CCc\RRr(\HHh)$ be the set of all closed range operators of $\BBb(\HHh)$. Our next purpose is to show that
$\CCc\RRr(\HHh)$ is WOT-Borel (and hence Borel as well). This is however not so simple. Firstly, the maps
$\BBb(\HHh) \ni A \mapsto A^* \in \BBb(\HHh)$ and $\BBb(\HHh) \times \BBb(\HHh) \ni (A,B) \mapsto AB \in \BBb(\HHh)$
are WOT-Borel (cf. \cite{effros}), since the first of them is WOT-continuous and the second is SOT-continuous
on bounded sets. Secondly, closed range operators may be characterized in the space $\BBb(\HHh)$ by means
of the so-called \textit{Moore-Penrose inverse} in the following way:

\begin{pro}{penrose}
An operator $A \in \BBb(\HHh)$ has closed range iff there is $B \in \BBb(\HHh)$ such that $ABA = A$, $BAB = B$
and $AB$ and $BA$ are selfadjoint. What is more, the operator $B$ is uniquely determined by these properties and if
$A$ has closed range, $B = A^{\dag} := (A\bigr|_{\RrR(A^*)})^{-1} P$ where $P$ is the orthogonal projection onto
the range of $A$.
\end{pro}

The operator $A^{\dag}$ appearing in the statement of \PRO{penrose} is the above mentioned Moore-Penrose inverse
of an operator $A \in \CCc\RRr(\HHh)$. For the proof of \PRO{penrose}, see e.g. \cite{penrose}.\par
In the proof of the next result we use the theorem on Souslin sets which states that if $Y$ and $Z$ are separable complete
metric spaces, $B$ is a Borel subset of $Y$ and $f\dd B \to Z$ is a continuous one-to-one function, then $f(B)$ is a Borel
subset of $Z$ (see \cite[Theorem~XIII.1.9]{k-m} or \cite[Corollary~A.7]{takesaki}, or \cite[Theorem~A.25]{takesaki3}
for more general result).

\begin{thm}{CR}
The set $\CCc\RRr(\HHh)$ is WOT-Borel.
\end{thm}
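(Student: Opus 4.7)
The plan is to realize $\CCc\RRr(\HHh)$ as a one-to-one continuous image of a WOT-Borel set and then invoke the Kuratowski-Mostowski Souslin theorem cited just before the statement. By \PRO{penrose}, an operator $A \in \BBb(\HHh)$ lies in $\CCc\RRr(\HHh)$ iff there exists a (necessarily unique) $B \in \BBb(\HHh)$ satisfying $ABA = A$, $BAB = B$, $(AB)^* = AB$ and $(BA)^* = BA$; this characterization is the gateway between the defining geometric property of $\CCc\RRr(\HHh)$ and something expressible by algebraic identities in WOT-Borel operations.

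First, for each $n \in \NNN$ I would set $K_n := \{A \in \BBb(\HHh) \dd \|A\| \leqsl n\}$ and
$$
E_n = \{(A,B) \in K_n \times K_n \dd ABA = A,\ BAB = B,\ (AB)^* = AB,\ (BA)^* = BA\}.
$$
By the Banach-Alaoglu theorem and the separability of $\HHh$, $K_n$ equipped with WOT is compact and metrizable, hence Polish, and so is $K_n \times K_n$. Since $A \mapsto A^*$ is WOT-continuous and $(A,B) \mapsto AB$ is SOT-continuous on bounded sets (hence WOT-Borel), each of the four defining conditions carves out a WOT-Borel subset of $K_n \times K_n$, so $E_n$ is WOT-Borel. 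Moreover, $K_n$ itself is WOT-Borel in $\BBb(\HHh)$, being the intersection of countably many WOT-closed conditions of the form $|\scalar{Ax}{y}| \leqsl n\|x\|\|y\|$ taken over a dense sequence of $(x,y)$.

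Next, the projection $\pi_1 \dd E_n \to K_n$ onto the first factor is WOT-continuous, and by the uniqueness part of \PRO{penrose} it is one-to-one. Since $E_n$ is a Borel subset of the Polish space $K_n \times K_n$, the Kuratowski-Mostowski theorem guarantees that $\pi_1(E_n)$ is a WOT-Borel subset of $K_n$, hence of $\BBb(\HHh)$. Every $A \in \CCc\RRr(\HHh)$ admits $A^{\dag} \in \BBb(\HHh)$ and therefore $(A,A^{\dag}) \in E_n$ for every $n \geqsl \max(\|A\|,\|A^{\dag}\|)$, so
$$
\CCc\RRr(\HHh) = \bigcup_{n=1}^{\infty} \pi_1(E_n),
$$
a countable union of WOT-Borel sets, and thus WOT-Borel.

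The principal obstacle is topological rather than algebraic: $\BBb(\HHh)$ is not Polish in WOT on the whole space, so the Souslin image theorem cannot be applied globally. The reduction to norm-closed balls (where WOT is in fact compact metrizable) and the subsequent countable union is the maneuver that must be handled with some care; everything else reduces to routine manipulation of WOT-Borel operations.
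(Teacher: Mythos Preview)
Your proposal is correct and follows essentially the same route as the paper's own proof: both restrict to norm-closed balls (where WOT is compact metrizable), encode the Moore-Penrose equations as a WOT-Borel subset of the product, use the uniqueness of $A^{\dag}$ to make the first-factor projection injective, invoke the Souslin injective-image theorem, and finish with a countable union. The only differences are notational.
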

\begin{proof}
For $r > 0$ let $B_r$ be the closed ball in $\BBb(\HHh)$ with center at $0$ and of radius $r$ equipped with the weak
operator topology. Let $\psi_r\dd B_r^2 \ni (T,S) \mapsto (TST - T, STS - S, S^*T^* - TS, T^*S^* - ST) \in B_s^4$
where $s := 2(r + 1)^3$. By the note preceding the statement of the theorem, $\psi_r$ is WOT-Borel. So, the set
$C_r := \psi_r^{-1}(\{(0,0,0,0)\})$ is WOT-Borel in $B_r$ as well. By \PRO{penrose}, $C_r = \{(A,A^{\dag})\dd\
A \in \CCc\RRr(\HHh),\ A, A^{\dag} \in B_r\}$. So, the projection $p_r$ of $C_r$ onto the first factor is one-to-one.
But $p_r$ is WOT-continuous and $B_r$ and $B_s$ are compact metrizable spaces. We infer from this that $E_r := p_r(C_r)$
is WOT-Borel in $B_s$. Finally, the observation that $B_s$ is WOT-Borel in $\BBb(\HHh)$ and $\CCc\RRr(\HHh) =
\bigcup_{n=1}^{\infty} E_n$ finishes the proof.
\end{proof}

\textbf{Question 2.} Which additive or multiplicative class, in the hierarchy of WOT-Borel sets in $\BBb(\HHh)$, the set
$\CCc\RRr(\HHh)$ is of?\\
\nl
Now we are ready to prove the following

\begin{thm}{type}
For each $k,n,m=0,1,2,\ldots,\infty$ with $k$ finite:
\begin{enumerate}[\upshape(a)]
\item $\Sigma_k(\HHh)$ is $\FFf \cap \GGg$,
\item $\Sigma_{n,m}^1(\HHh)$ is $\FFf \cap \GGg$ (respectively open) provided $m$ or $n$ is finite (respectively
   $m=0$ or $n=0$),
\item $\Sigma_{0,0}^0(\HHh)$ is $\GGg_{\delta}$; $\Sigma_{n,m}^0(\HHh)$ is $\FFf_{\sigma} \cap \GGg_{\delta}$ for finite
   $n$ and $m$; $\Sigma_{n,m}^0(\HHh)$ is $\FFf_{\sigma\delta}$ if either $n$ or $m$ is infinite (and the other is finite),
\item $\Sigma_{\infty,\infty}^1(\HHh)$ and $\Sigma_{\infty,\infty}^0(\HHh)$ are Borel.
\end{enumerate}
\end{thm}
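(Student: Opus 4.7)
The plan is to prove all four parts by combining \LEM{ker=n}, \THM{CR}, and classical facts from semi-Fredholm theory, while using that the adjoint map $A\mapsto A^*$ is a norm isometry on $\BBb(\HHh)$ (so it preserves every Borel class and lets one transfer \LEM{ker=n} from $\Sigma_{n,*}(\HHh)$ to the corresponding $\Sigma_{*,n}(\HHh)$). For (a) I would first observe that rank is upper semicontinuous in the norm topology: if $\dim\RrR(A)\geqsl k+1$, one may choose $x_1,\ldots,x_{k+1}$ whose images have nonzero Gram determinant, and the determinant depends norm-continuously on $A$. Hence $\{A\in\BBb(\HHh)\dd\ \dim\RrR(A)\leqsl k\}$ is closed in norm, and $\Sigma_k(\HHh)$ is a difference of two closed sets, that is, of class $\FFf\cap\GGg$.

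For (b) I would invoke three classical facts about the set of semi-Fredholm operators in $\BBb(\HHh)$: it is open in the norm topology, the index is locally constant on it, and both $\dim\NnN(\cdot)$ and $\dim\NnN(\cdot^*)$ are upper semicontinuous on it. When $n,m$ are both finite, $\Sigma_{n,m}^1(\HHh)$ is the set of Fredholm operators of index $n-m$ and kernel dimension exactly $n$; the index condition cuts out a clopen subset of the Fredholm set, while ``$\dim\NnN(\cdot)=n$'' is the difference $\{\dim\NnN(\cdot)\leqsl n\}\setminus\{\dim\NnN(\cdot)\leqsl n-1\}$ of two open sets, so $\Sigma_{n,m}^1(\HHh)$ is locally closed in $\BBb(\HHh)$, that is, $\FFf\cap\GGg$. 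In the distinguished cases $n=0$ or $m=0$ the subtracted set is empty, so $\Sigma_{n,m}^1(\HHh)$ becomes open; and the same openness persists when the remaining index is $\infty$, again by index-constancy on the appropriate semi-Fredholm stratum.

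The core of the argument is (c), and its key observation is that for finite $n$ an operator $A\in\Sigma_{n,*}(\HHh)$ has closed range if and only if it is semi-Fredholm. Since the set of semi-Fredholm operators is open, $\Sigma_{n,*}(\HHh)\cap\CCc\RRr(\HHh)$ equals $\Sigma_{n,*}(\HHh)$ intersected with an open subset of $\BBb(\HHh)$; consequently $\Sigma_{n,*}(\HHh)\cap(\BBb(\HHh)\setminus\CCc\RRr(\HHh))$ equals $\Sigma_{n,*}(\HHh)$ intersected with a closed set, and by \LEM{ker=n} it therefore inherits the Borel class of $\Sigma_{n,*}(\HHh)$ --- namely $\GGg_\delta$ when $n=0$ and $\FFf_\sigma\cap\GGg_\delta$ for finite $n>0$. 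Since $\dim\overline{\RrR}(A)=\infty$ is automatic whenever one of $n,m$ is finite, I may write
\[
\Sigma_{n,m}^0(\HHh)\;=\;\bigl(\Sigma_{n,*}(\HHh)\cap(\BBb(\HHh)\setminus\CCc\RRr(\HHh))\bigr)\cap\Sigma_{*,m}(\HHh),
\]
or the symmetric form when $m$ is the finite index, and the asserted Borel class then drops out by routine intersection arithmetic inside the classes $\FFf_\sigma\cap\GGg_\delta$, $\GGg_\delta$ and $\FFf_{\sigma\delta}$, using in particular that $\FFf_{\sigma\delta}\cap\FFf_\sigma$ and $\FFf_{\sigma\delta}\cap\GGg_\delta$ are contained in $\FFf_{\sigma\delta}$.

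Part (d) is then immediate: each of $\Sigma_{\infty,\infty}^1(\HHh)$ and $\Sigma_{\infty,\infty}^0(\HHh)$ is the intersection of $\Sigma_{\infty,*}(\HHh)$, $\Sigma_{*,\infty}(\HHh)$, $\CCc\RRr(\HHh)$ (respectively its complement), and $\{A\in\BBb(\HHh)\dd\ \dim\overline{\RrR}(A)=\infty\}=\BBb(\HHh)\setminus\bigcup_{k=0}^{\infty}\Sigma_k(\HHh)$; each factor is Borel by \LEM{ker=n}, \THM{CR} and part (a) respectively, so the intersection is Borel as well. The principal obstacle is the bookkeeping of Borel classes in (c): without the reduction $\Sigma_{n,*}\cap\CCc\RRr=\Sigma_{n,*}\cap\{\textup{semi-Fredholm operators}\}$ supplied by the openness of the semi-Fredholm set, one would only obtain a crude ``Borel'' estimate for $\Sigma_{n,*}\cap(\BBb(\HHh)\setminus\CCc\RRr(\HHh))$, falling well short of the $\FFf_\sigma\cap\GGg_\delta$ and $\FFf_{\sigma\delta}$ bounds that the statement demands.
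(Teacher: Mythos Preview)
Your proof is correct and follows essentially the same route as the paper's: part~(a) via upper semicontinuity of rank, part~(b) via openness of the semi-Fredholm strata $F(k)$ together with upper semicontinuity of $\dim\NnN(\cdot)$ there (the paper phrases this as the openness of the sets $F_l(k)=\{A\in F(k)\dd\ \dim\NnN(A)\leqsl l\}$ and writes $\Sigma_{n,m}^1(\HHh)=F_n(k)\setminus F_{n-1}(k)$), part~(c) via the identity $\Sigma_{n,m}^0(\HHh)=\Sigma_{n,*}(\HHh)\cap\Sigma_{*,m}(\HHh)\setminus F(k)$ combined with \LEM{ker=n}, and part~(d) via \THM{CR}. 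Your ``key observation'' that for finite $n$ closed range is equivalent to semi-Fredholmness on $\Sigma_{n,*}(\HHh)$ is exactly what makes the paper's subtraction of the open set $F(k)$ legitimate, so the two arguments are the same in substance; the only cosmetic difference is that the paper subtracts the single index component $F(k)$ while you subtract the whole semi-Fredholm set, which is harmless since on $\Sigma_{n,*}(\HHh)\cap\Sigma_{*,m}(\HHh)$ the index is forced to be $n-m$ anyway.
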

\begin{proof}
Clearly, for each finite $k$ the set of all finite rank operators $A \in \BBb(\HHh)$ such that $\dim \RrR(A) \leqsl k$
is closed and therefore $\Sigma_k(\HHh)$ is $\FFf \cap \GGg$. Further, for each $n=0,1,2\ldots,\infty$ let
$\Sigma_{n,*}(\HHh)$ be as in \LEM{ker=n} and let $\Sigma_{*,n}(\HHh) = \{A^*\dd\ A \in \Sigma_{n,*}(\HHh)\}$.
Observe that $\Sigma_{*,n}(\HHh)$ is of the same Borel class as $\Sigma_{n,*}(\HHh)$.\par
Suppose that $n$ or $m$ is finite. With no loss of generality, we may assume that $n \leqsl m$. Put $k = n - m \in
\ZZZ \cup \{-\infty\}$. The set $F(k)$ of all semi-Fredholm operators of index $k$ is open in $\BBb(\HHh)$ (see e.g.
Proposition XI.2.4 and Theorem XI.3.2 in \cite{conway}). It may also be shown that for each integer $l \geqsl 0$ the set
$F_l(k)$ of all $A \in F(k)$ such that $\dim \NnN(A) \leqsl l$ is open in $\BBb(\HHh)$ as well (see e.g.
\cite[Proposition~5.3]{pn}). Now the relation $\Sigma_{n,m}^1(\HHh) = F_n(k) \setminus F_{n-1}(k)$ (with $F_{-1}(k) =
\varempty$) shows (b). Further, since $\Sigma_{n,m}^0(\HHh) = \Sigma_{n,*}(\HHh) \cap \Sigma_{*,m}(\HHh) \setminus F(k)$,
we infer from \LEM{ker=n} the assertion of (c).\par
Finally, $\Sigma_{\infty,\infty}^1(\HHh) = \Sigma_{\infty,*}(\HHh) \cap \Sigma_{*,\infty}(\HHh) \cap \CCc\RRr(\HHh)
\setminus \bigcup_{n=0}^{\infty} \Sigma_n(\HHh)$. So, this set is Borel by \THM{CR}. Since $\Sigma_{\infty,\infty}^0(\HHh)$
is the complement in $\BBb(\HHh)$ of the union of all other members of $\Sigma(\HHh)$, it is Borel as well.
\end{proof}

\textbf{Question 3.} Which additive or multiplicative class, in the hierarchy of Borel sets in $\BBb(\HHh)$, the sets
$\Sigma_{\infty,\infty}^1(\HHh)$ and $\Sigma_{\infty,\infty}^0(\HHh)$ are of?\\
\nl
Now let $T$ be a closed densely defined operator in $\HHh$. By $\sigma(T)$ we denote the spectrum of $T$. That is,
a complex number $\lambda$ does \textbf{not} belong to $\sigma(T)$ iff $\NnN(T - \lambda) = \{0\}$, $\RrR(T - \lambda) =
\HHh$ and $(T - \lambda)^{-1}$ is bounded (the latter condition may be omitted by the Closed Graph Theorem). We decompose
the complex plane into parts corresponding to the members of $\Sigma(\HHh)$:
\begin{itemize}
\item $\sigma^f(T)$ is the set of all $z \in \CCC$ such that $\RrR(T - z)$ is finite-dimensional,
\item for $n,m=0,1,2,\ldots,\infty$ let $\sigma_{n,m}^1(T)$ and $\sigma_{n,m}^0(T)$ be the sets consisting of all
   $z \in \CCC$ for which $\dim \NnN(T - z) = n$, $\dim \RrR(T - z)^{\perp} = m$, $\dim \overline{\RrR}(T - z) = \infty$
   and, respectively, $\RrR(T - z)$ is closed or not.
\end{itemize}
Notice that $\CCC \setminus \sigma(T) = \sigma_{0,0}^1(T)$ is the resolvent set of $T$ and that the above defined sets are
pairwise disjoint and cover the complex plane. The collection of all of them is denoted by $\Sigma(T)$. We say that
the sets $\sigma_{n,m}^1(T)$ and $\sigma_{n,m}^0(T)$ correspond to, respectively, $\Sigma_{n,m}^1(\HHh)$
and $\Sigma_{n,m}^0(\HHh)$. What we want is to prove

\begin{pro}{plane}
For every closed densely defined operator $T$ in $\HHh$, $\Sigma(T)$ consists of Borel subsets of $\CCC$. What is more,
$\card \sigma^f(T) \leqsl 1$ and each member of $\Sigma(T)$ different from $\sigma^f(T)$ is of the same Borel class
as the corresponding member of $\Sigma(\HHh)$.
\end{pro}
\begin{proof}
First observe that if $z \in \sigma^f(T)$, then $\DdD(T) = \HHh$ and $T$ is bounded (since $\NnN(T - z)$ is closed).
We conclude from this that indeed $\card \sigma^f(T) \leqsl 1$. Further, since $T$ is closed, there is an operator
$A \in \BBb(\HHh)$ such that $\NnN(A) = \{0\}$ and $\RrR(A) = \DdD(T)$ (e.g. $A := (|T| + 1)^{-1}$; see also
\cite[Theorem~1.1]{ranges}). Put $C = TA \in \BBb(\HHh)$. Notice that for each $z \in \CCC$, $\RrR(C - z A) = \RrR(T - z)$
and $\NnN(C - z A) = A^{-1}(\NnN(T - z))$, and thus $\dim \NnN(T - z) = \dim \NnN(C - z A)$. It is infered from this
that $z \in \sigma_{n,m}^j(T)$ iff $C - zA \in \Sigma_{n,m}^j(\HHh)$. So, the continuity of the function $\CCC \ni z
\mapsto C - zA \in \BBb(\HHh)$ finishes the proof.
\end{proof}

We end the paper with notes concerning the so-called \textit{residual} and \textit{continuous} spectra of an operator.
In the literature there are at least two different, nonequivalent definitions of them. For some mathematicians the residual
spectrum $\sigma_r(T)$ of a closed densely defined operator $T$ in a separable Hilbert space $\HHh$ consists of all
$\lambda \in \sigma(T) \setminus \sigma_p(T)$ such that $\RrR(T - \lambda)$ is closed (e.g. \cite{b-s}), for others it is
the set of all $\lambda \in \sigma(T) \setminus \sigma_p(T)$ such that $\overline{\RrR}(T - \lambda) \neq \HHh$
(e.g. \cite{mlak}). Similarly, the continuous spectrum $\sigma_c(T)$ of $T$ for some people means the set of all $\lambda
\in \KKK$ such that $\RrR(T - \lambda)$ is nonclosed (e.g. \cite{b-s}), for others it coincides with $\sigma(T)
\setminus (\sigma_p(T) \cup \sigma_r(T))$ (e.g. \cite{mlak}). Somehow or other --- whatever definitions we choose among
the above, both the residual and the continuous spectra are Borel and are the unions of subfamilies of $\Sigma(T)$.

\end{document}